\author{Mathieu Molitor
\\
\it \small{Department of Mathematics, Keio University}\\
\it \small{3-14-1, Hiyoshi, Kohoku-ku, 223-8522, Yokohama, Japan}\\ 
\small{\it{e-mail:}}\,\,\url{pergame.mathieu@gmail.com}}
\title{Information geometry and the hydrodynamical formulation of quantum mechanics} 
\date{}
\begin{document}
\newtheorem{lemma}{Lemma}[section]
\newtheorem{definition}[lemma]{Definition}
\newtheorem{proposition}[lemma]{Proposition}
\newtheorem{corollary}[lemma]{Corollary}
\newtheorem{theorem}[lemma]{Theorem}
\newtheorem{remark}[lemma]{Remark}
\newtheorem{example}[lemma]{Example}
\bibliographystyle{alpha}

\maketitle 

\begin{abstract}
	Let $(M,g)$ be a compact, connected and oriented Riemannian manifold with volume form $d\textup{vol}_{g}\,.$ We denote 
	$\mathcal{D}$ the space of smooth probability density functions on $M\,,$ i.e. 
	$\mathcal{D}:=\{\rho\in C^{\infty}(M,\mathbb{R})\,\vert\,
	\rho>0\,\,\textup{and}\,\,\int_{M}\rho\cdot d\textup{vol}_{g}=1\}\,.$

	In this paper, we show that the Fr\'{e}chet manifold $\mathcal{D}$ is equipped with a Riemannian metric $g^{\mathcal{D}}$ and 
	an affine connection $\nabla^{\mathcal{D}}$ which are infinite dimensional 
	analogues of the Fisher metric and exponential connection in the context of information geometry. More precisely, we use 
	Dombrowski's construction together with the couple $(g^{\mathcal{D}},\nabla^{\mathcal{D}})$ 
	to get a (non-integrable) almost Hermitian structure on 
	$T\mathcal{D}\,,$ and we show that the corresponding fundamental $2$-form is a symplectic form from which it is possible 
	to recover the usual Schr\"{o}dinger equation for a quantum particle living in $M\,.$ 

	These results echo a recent paper of the author where it is stressed that the Fisher metric and exponential 
	connection are related 
	(via Dombrowski's construction) to K\"{a}hler geometry and quantum mechanics in finite dimension. 
\end{abstract}

\section*{Introduction}
	A \textit{statistical manifold} defined over a measured space $(\Omega,dx)$ 
	is a manifold $S$ together with an injection 
	\begin{eqnarray}
		j\,:\,S\hookrightarrow\bigg\{p\,:\,\Omega\rightarrow\mathbb{R}\,\big\vert\,
		p\,\,\textup{is measurable,}\,\,\,p\geq 0\,\,\,\textup{and}\,\,\,\int_{\Omega}\,
		p(x)\,dx=1\bigg\}\,.
	\end{eqnarray}
	It is known, in the context of \textit{information geometry}
		\footnote{Information geometry 
		is a branch of statistics characterized by its use of differential geometric techniques, 
		see \cite{Amari-Nagaoka,Murray-Rice}.}, 
	that a ``reasonable" statistical manifold $S$
	possesses a uniquely defined \textit{dualistic structure}\footnote{A dualistic structure on a manifold 
	$M$ is a triple $(g,\nabla,\nabla^{*})\,,$ where $g$ is a Riemannian metric and where $\nabla\,,\nabla^{*}$ are affine connections 
	which are dual to each other in the sense that $X\big(g(Y,Z)\big)=g\big(\nabla_{X}Y,Z\big)+g\big(Y,\nabla_{X}^{*}Z\big)$ 
	for all vector fields $X,Y,Z$ on $M\,.$ The connection $\nabla^{*}$ is called the \textit{dual connection} of 
	$\nabla^{*}$ (and vice versa).} $(h_{F},\nabla^{(e)},\nabla^{(m)})\,;$ 
	the metric $h_{F}$ is called the \textit{Fisher metric}, 
	$\nabla^{(e)}$ is the \textit{exponential connection} and $\nabla^{(m)}$ is the \textit{mixture connection}. 
	These geometrical objects encode many important statistical properties of the statistical manifold $S\,.$ For example, they 
	can be used to give lower bounds in estimation problems (e.g. Cram\'{e}r-Rao inequality, see \cite{Amari-Nagaoka,Murray-Rice}). 
	
	The Fisher metric and exponential connection are defined as follows. For a chart $\xi=(\xi_{1},...,\xi_{n})$ 
	of $S\,,$ and denoting $\Gamma_{ij}^{k}$ the Christoffel symbols 
	of $\nabla^{(e)}$ in this chart, we have :
	\begin{description}
	\item[$\bullet$] $(h_{F})_{\xi}\big(\partial_{i},\partial_{j}):=
		E_{p_{\xi}}(\partial_{i}\textup{ln}\,(p_{\xi})\cdot
		\partial_{j}\textup{ln}\,(p_{\xi})\big)\,,$
	\item[$\bullet$] $\Gamma_{ij}^{k}(\xi):=E_{p_{\xi}}\Big[\Big(\partial_{i}\partial_{j}
		\textup{ln}\,(p_{\xi})\cdot\partial_{j}
		\textup{ln}\,(p_{\xi})\Big)\,\partial_{k}\textup{ln}\,(p_{\xi})\Big]\,,$
	\end{description}
	where $E_{p_{\xi}}$ denotes the mean, or expectation, with respect to the probability 
	$p_{\xi}\,dx$ (here $p_{\xi}$ denotes the unique probability density function determined by $\xi$), 
	and where $\partial_{i}$ is a shorthand for $\partial/\partial_{\xi_{i}}\,.$ The connection $\nabla^{(m)}$ 
	is obtained via the duality between $\nabla^{(m)}$ and $\nabla^{(e)}\,.$

	It has recently been stressed in \cite{Molitor-quantique,Molitor-exponential} that dualistic structures on statistical 
	manifolds play a central role in the mathematical foundations of \textit{finite dimensional}\footnote{By finite 
	dimensional, we are referring to quantum systems whose associated Hilbert spaces $\mathcal{H}$ are finite dimensional,
	like for the spin of a particle.} quantum mechanics, in which \textit{Dombrowski's construction} is particularly important. 
	Recall that given a metric $g$ and a connection 
	$\nabla$ on a manifold $M$ ($\nabla$ needs not be the Levi-Civita connection), Dombrowski's construction yields 
	an almost Hermitian structure 
	$(g^{TM},J^{TM},\omega^{TM})$ on $TM\,,$ the latter structure being K\"{a}hler if and only if $\nabla$ and $\nabla^{*}$ are both flat (see
	\cite{Dombrowski,Molitor-exponential}). 
	For example, if $\Omega:=\{x_{1},...,x_{n}\}$ is a finite set and if
	$\mathcal{P}_{n}^{\times}$ is the statistical manifold of nowhere vanishing probabilities $p\,:\,\Omega\rightarrow \mathbb{R}\,,$ $p>0\,,$ 
	$\sum_{k=1}^{n}\,p(x_{k})=1\,,$ then $\nabla^{(e)}$ and $\nabla^{(m)}$ are flat and the K\"{a}hler 
	structure associated to $(h_{F},\nabla^{(e)})$ via Dombrowski's construction 
	on $T\mathcal{P}_{n}^{\times}$ is locally isomorphic to the complex projective space $\mathbb{P}(\mathbb{C}^{n})$ 
	of complex lines in $\mathbb{C}^{n}$ \cite{Molitor-quantique}.

	This example, although mathematically simple, is physically fundamental. For, as
	it is known, a finite dimensional quantum system can be entirely described by 
	the K\"{a}hler structure of the complex projective space 
	$\mathbb{P}(\mathbb{C}^{n})$ associated to the Hilbert space $\mathbb{C}^{n}$ of quantum states\footnote{This is also 
	true for infinite dimensional quantum systems.}; this is the 
	so-called \textit{geometrical formulation} of quantum mechanics (see for example \cite{Ashtekar}). 
	Hence, by realizing an open dense subset of $\mathbb{P}(\mathbb{C}^{n})$ as a 
	``K\"{a}hlerification" of $\mathcal{P}_{n}^{\times}$ via Dombrowski's construction (see \cite{Molitor-exponential} 
	for a precise statement), we directly 
	connect information geometry to quantum mechanics.

	Based on these observations, and together with other mathematical results, we were led in \cite{Molitor-exponential} to conjecture 
	that the quantum formalism, at least in finite dimension, has a purely information-theoretical origin in which the Fisher metric and 
	the exponential connection, together with Dombrowki's construction, are crucial.\\

	The purpose of the present paper is to investigate the properties of a particular \textit{infinite dimensional} quantum system in 
	the light of the results obtained in \cite{Molitor-quantique,Molitor-exponential}. Our quantum system is a non-relativistic 
	quantum particle, mathematically represented by a wave function 
	$\psi\,:\,M\rightarrow \mathbb{C}\,,$ living on a compact and connected Riemannian manifold 
	$(M,g)\,,$ and whose dynamics is governed by the Schr\"{o}dinger equation 
	\begin{eqnarray}\label{equation Schrodinger encore et oui!}
		i\hslash \,\dfrac{\partial \psi}{\partial t}=-\dfrac{\hslash^{2}}{2}\,\Delta\,\psi+V\psi\,,
	\end{eqnarray}
	where $\hbar$ is Planck constant, $\Delta$ is the Laplacian operator and where 
	$V\,:\,M\rightarrow \mathbb{R}$ is a given potential.
		
	To this system, we attach, as a statistical model, the space $\mathcal{D}$ of smooth 
	density probability functions on $M\,:$ 
	\begin{eqnarray}
		\mathcal{D}:=\big\{\rho\in C^{\infty}(M,\mathbb{R})\,\big\vert\,\rho>0\,,\,\,\,
		\smallint_{M}\,\rho\,d\textup{vol}_{g}=1\big\}\,,
	\end{eqnarray}
	where $d\textup{vol}_{g}$ denotes the Riemannian volume form associated to $g$
	($M$ is assumed oriented). We regard the space $\mathcal{D}$ as an infinite dimensional 
	analogue of $\mathcal{P}_{n}^{\times}\,.$ 

	In this paper, we show the following: first, that it is possible 
	to rewrite the Schr\"{o}dinger equation \eqref{equation Schrodinger encore et oui!} into a genuine 
	system of Lagrangian equations on $T\mathcal{D}$ for an appropriate Lagrangian 
	$\mathcal{L}\,:\,T\mathcal{D}\rightarrow \mathbb{R}\,.$ Second, that this Lagrangian system 
	can be reformulated in a symplectic way on $T\mathcal{D}$ using geometric mechanical methods. 
	Finally, that the corresponding symplectic form $\Omega_{\mathcal{L}}$ on $T\mathcal{D}$ 
	is nothing but the fundamental form of the almost Hermitian structure associated, via Dombrowski's construction, to 
	a natural metric $g^{\mathcal{D}}$ and a connection $\nabla^{\mathcal{D}}$ living on $\mathcal{D}\,.$  
	
	The couple $(g^{\mathcal{D}},\nabla^{\mathcal{D}})$ on $\mathcal{D}$
	is thus --and this is the main observation of this paper-- 
	an infinite dimensional analogue of $(h_{F},\nabla^{(e)})$ on $\mathcal{P}_{n}^{\times}$ 
	which encodes the dynamics of the quantum particle, exactly as in the finite dimensional case (see \cite{Molitor-exponential}). 

	Additionally, we observe that the almost complex structure of $T\mathcal{D}$ is not integrable and that, contrary to
	$\nabla^{(e)}\,,$ the connection $\nabla^{\mathcal{D}}$ on $\mathcal{D}$ has a non-trivial torsion (this proves in particular that 
	$\nabla^{\mathcal{D}}$ it \textit{not} 
	the Levi-Civita connection associated to $g^{\mathcal{D}}$).\\

	This paper is organized as follows. In \S\ref{subsection tangent} we describe the 
	geometry of $\mathcal{D}$ and its tangent bundle; that will allow us, 
	in \S\ref{section Euler-Lagrange etc.} and 
	\S\ref{section Hamiltonian}, to recast the Schr\"{o}dinger equation directly on $T\mathcal{D}\,,$ in a 
	Lagrangian form (\S\ref{section Euler-Lagrange etc.}) and in a Hamiltonian form 
	(\S\ref{section Hamiltonian}). Finally, in \S\ref{sss avant dernier} we observe that the symplectic form 
	$\Omega_{\mathcal{L}}$ on $T\mathcal{D}$ describing  the dynamics of the quantum particle 
	is nothing but the fundamental form of the 
	almost Hermitian structure associated to $(g^{\mathcal{D}},\nabla^{\mathcal{D}})$ on $\mathcal{D}\,.$	
	The paper ends with \S\ref{sss la fin} where we discuss a possible definition for the wave function 
	associated to a moving probability. An example is considered. 

	Some of our results are expressed in the category of \textit{tame Fr\'{e}chet manifolds} introduced 
	by Hamilton in \cite{Hamilton}. The relevant definitions are recalled in \S\ref{subsection the category of hamitlon}. 

\section{Hamilton's category of tame Fr\'echet manifolds}
\label{subsection the category of hamitlon}
	In this section, we review very briefly the category of tame Fr\'{e}chet manifolds 
	introduced by Hamilton in \cite{Hamilton}. 
\begin{definition}
		\begin{enumerate}
			\item A graded Fr\'echet space $(F,\{\|\,.\,\|_{n}\}_{n\in\mathbb{N}})\,,$
				is a Fr\'echet space $F$ whose topology is defined by a collection of seminorms 
				$\{\|\,.\,\|_{n}\}_{n\in\mathbb{N}}$ which are increasing in strength:
				\begin{eqnarray}
					\|x\|_{0}\leq\|x\|_{1}\leq\|x\|_{2}\leq \cdots
				\end{eqnarray}
				for all $x\in F\,.$
			\item A linear map $L\,:\,F\rightarrow G$ between two graded Fr\'echet spaces
				$F$ and $G$ is tame (of degree $r$ and base $b$) if for all $n\geq b\,,$
				there exists a constant $C_{n}>0$ such that for all $x\in F\,,$
				\begin{eqnarray}
					\|L(x)\|_{n}\leq C_{n}\,\|x\|_{n+r}\,.
				\end{eqnarray}
			\item If $(B,\|\,.\,\|_{B})$ is a Banach space, then $\Sigma(B)$ denotes 
				the graded Fr\'echet space of all sequences $\{x_{k}\}_{k\in\mathbb{N}}$ of 
				$B$ such that for all $n\geq 0,$ 
				\begin{eqnarray}
					\|\{x_{k}\}_{k\in\mathbb{N}}\|_{n}:=\displaystyle\Sigma_{k=0}^{\infty}\,
					e^{nk}\|x_{k}\|_{B}<\infty\,.
				\end{eqnarray}
			\item A graded Fr\'echet space $F$ is tame if there exist a Banach space 
				$B$ and two tame linear maps $i\,:\,F\rightarrow \Sigma(B)$ and 
				$p\,:\,\Sigma(B)\rightarrow F$ such that $p\circ i$ is the identity on $F\,.$
			\item Let $F,G$ be two tame Fr\'echet spaces, $U$ an open subset of 
				$F$ and $f\,:\,U\rightarrow G$ a map. We say that $f$ is a smooth tame map
				if $f$ is smooth\footnote{By smooth we mean that 
				$f\,:\,U\subseteq F\rightarrow G$ 
				is continuous and that 
				for all $k\in\mathbb{N}\,,$ the $k$th derivative 
				$d^{k}f\,:\,U\times F\times \cdots \times F
				\rightarrow G$ exists and is jointly continuous on the product  space, such as 
				described in \cite{Hamilton}.\label{footnote}} 
				and if for every $k\in\mathbb{N}$ and for every 
				$(x,u_{1},...,u_{k})\in U\times F\times \cdots F\,,$ there exist a neighborhood
				$V$ of $(x,u_{1},...,u_{k})$ in $U\times F\times \cdots F$ and 
				$b_{k},r_{0},...,r_{k}\in\mathbb{N}$ such that for every $n\geq b_{k}\,,$ there 
				exists $C_{k,n}^{V}>0$ such that 
				\begin{eqnarray}
					&&\|d^{k}f(y)\{v_{1},...,v_{k}\}\|_{n}
					\,\,\leq \,\,C_{k,n}^{V}\,\big(1+\|y\|_{n+r_{0}}
					+\|v_{1}\|_{n+r_{1}}+\cdots+\|v_{k}\|_{n+r_{k}}\big)\,,
				\end{eqnarray}
				for every $(y,v_{1},...,v_{k})\in V\,,$ where $d^{k}f\,:\,
				U\times F\times\cdots\times F\rightarrow G$ denotes the $k$th derivative of 
				$f\,.$
		\end{enumerate}
\end{definition}
\begin{remark}
	In this paper, we use interchangeably the notation 
	$(df)(x)\{v\}$ or $f_{*_{x}}v$ for the first derivative of $f$ at a 
	point $x$ in direction $v\,.$ 
\end{remark}
	As one may notice, tame Fr\'echet spaces and smooth tame maps form 
	a category, and it is thus natural to define a tame Fr\'echet manifold 
	as a Hausdorff topological space with an atlas of coordinates charts taking their value in
	tame Fr\'echet spaces, such that the coordinate transition functions are all
	smooth tame maps (see \cite{Hamilton}).
	The definition of a tame smooth map between tame Fr\'echet manifolds is then 
	straightforward, and we thus obtain a subcategory of the category of Fr\'echet manifolds.\\
	In order to avoid confusion, let us also make precise our notion of submanifold. We will say 
	that a subset $\mathcal{M}$ of a 
	tame Fr\'echet manifold $\mathcal{M}\,,$ endowed with the trace topology, 
	is a submanifold, 
	if for every point $x\in \mathcal{M}\,,$ there exists a chart 
	$(\mathcal{U},\varphi)$ of $\mathcal{M}$ such that $x\in \mathcal{U}$ and such that 
	$\varphi(\mathcal{U}\cap \mathcal{M})=U\times \{0\}\,,$ where 
	$\varphi(\mathcal{U})=U\times V$ is a product of two open subsets of tame Fr\'echet spaces. 
	Note that a submanifold of a tame Fr\'echet manifold is also a tame Fr\'echet manifold.\\

	For the sake of completeness, let us state here the raison d'\^{etre} of 
	tame Fr\'echet spaces and tame Fr\'echet manifolds (see \cite{Hamilton}) :
\begin{theorem}[Nash-Moser inverse function Theorem]
	Let $F,G$ be two tame Fr\'echet spaces, $U$ an open subset of 
	$F$ and $f\,:\, U\rightarrow G$ a smooth tame map. If there exists 
	an open subset $V\subseteq U$ such that 
	\begin{enumerate}
		\item $df(x)\,:\,F\rightarrow G$ is an linear isomorphism for all 
			$x\in V\,,$
		\item the map $V\times G\rightarrow F,\,(x,v)\mapsto 
			\big(df(x)\big)^{-1}\{v\}$ is a smooth tame map, 
	\end{enumerate}
	then $f$ is locally invertible on $V$ and each local inverse is a
	smooth tame map. 
\end{theorem}
\begin{remark}
	The Nash-Moser inverse function Theorem is important in geometric hydrodynamics, 
	since one of its most important geometric objects, namely the group 
	of all smooth volume preserving diffeomorphims 
	$\textup{SDiff}_{\mu}(M):=\{\varphi\in\textup{Diff}(M)\,\vert\,\varphi^{*}\mu=\mu\}$ 
	of an oriented manifold $(M,\mu)\,,$ 
	can only be given a rigorous Fr\'echet Lie group structure by
	using an inverse function theorem (at least up to now).
	To our knowledge, only two authors succeeded in doing this. The first was Omori 
	who showed and used an inverse function theorem in terms of 
	ILB-spaces (``inverse limit of Banach spaces", see \cite{Omori}), and later on, Hamilton with 
	his category of tame Fr\'echet spaces together with the Nash-Moser inverse function 
	Theorem (see \cite{Hamilton}). Nowadays, it is nevertheless not uncommon 
	to find  mistakes or 
	big gaps in the literature when it comes to the differentiable 
	structure of $\textup{SDiff}_{\mu}(M)\,,$
	even in some specialized textbooks in infinite dimensional geometry. The case 
	of $M$ being non-compact is even worse, and of course, no proof that $\textup{SDiff}_{\mu}(M)$ 
	is a ``Lie group" is available in this case.
\end{remark}
	Finally, and quite apart from the category of Hamilton, let us remind one of the most 
	useful result of the convenient calculus 
	(see \cite{Kriegl-Michor}) :
\begin{lemma}\label{lemme convenient calculus encore}
	Let $F,G$ be two Fr\'echet spaces, $U$ an open subset of $F$ and $f\,:\,U\rightarrow G$
	a map. Then $f$ is smooth is the sense of Hamilton (see footnote \ref{footnote}), 
	if and only if 
	$f\circ c\,:\,I\rightarrow \mathbb{R}$ is a smooth curve in $G$ whenever
	$c\,:\,I\rightarrow U$ is a smooth curve in $U\,.$
\end{lemma}
	As one may show, if $M,N$ are manifolds, $M$ being compact, then 
	a smooth curve in the Fr\'echet manifold 
	$C^{\infty}(M,N)$ may by identified with a smooth map $f\,:\,I\times M\rightarrow N\,,$
	its time derivative being identified with the partial derivative of $f$ 
	with respect to $t\,.$ From this together with Lemma \ref{lemme convenient calculus encore}\,,
	it is usually easy to show that a map defined between 
	submanifolds of spaces of maps is smooth : it suffices to compose this map 
	with a smooth curve, and then to check that the result is smooth in the ``finite 
	dimensional sense" with respect to all the ``finite dimensional" variables (see 
	\cite{Kriegl-Michor}).  

\section{The manifold structure of $\mathcal{D}$ and its tangent bundle}\label{subsection tangent}
\label{section manifold structure...}
	Let $(M,g)$ be a compact, connected and oriented Riemannian manifold with Riemannian volume form $d\textup{vol}_{g}\,,$ and let 
	$\mathcal{D}$ be the space of smooth density probability functions on $M\,:$ 
	\begin{eqnarray}
		\mathcal{D}:=\big\{\rho\in C^{\infty}(M,\mathbb{R})\,\big\vert\,\rho>0\,,\,\,\,
		\smallint_{M}\,\rho\,d\textup{vol}_{g}=1\big\}\,.
	\end{eqnarray}
	Throughout this section, we shall write $C^{\infty}(M)$ instead of $C^{\infty}(M,\mathbb{R})$ (and 
	similar for subspaces of $C^{\infty}(M,\mathbb{R})$) if there is no danger of confusion. We shall also 
	use the notation $\mathbb{R}_{+}^{*}:=\{r\in\mathbb{R}\,\vert\,r>0\}\,.$\\

	Let us start with the differentiable structure of $\mathcal{D}\,.$
\begin{proposition}\label{Proposition D manifold}
	The space $\mathcal{D}$ is a tame Fr\'echet submanifold of the tame Fr\'echet space 
	$C^{\infty}(M)\,,$ and for $\rho\in\mathcal{D}\,,$
	\begin{eqnarray}\label{equation 1 identification tangent}
		T_{\rho}\mathcal{D}\cong C_{0}^{\infty}(M)\,,
	\end{eqnarray}
	where 
	\begin{eqnarray}
		C_{0}^{\infty}(M):=\big\{f\in C^{\infty}(M)\,\big\vert\,
		\smallint_{M}\,f\,d\textup{vol}_{g}=0\big\}\,.
	\end{eqnarray}
\end{proposition}
	Observe that we have the following $L^{2}$-orthogonal decomposition,
	\begin{eqnarray}\label{equation decomposition L2}
		C^{\infty}(M)=C_{0}^{\infty}(M)\oplus \mathbb{R}\,,
	\end{eqnarray}
	the decomposition being given, for $f\in C^{\infty}(M)\,,$ by 
	\begin{eqnarray}
		f=f-\dfrac{1}{\textup{Vol}(M)}\int_{M}\,f\,\cdot d\textup{vol}_{g}
		+\dfrac{1}{\textup{Vol}(M)}\int_{M}\,f\,\cdot d\textup{vol}_{g}\,,
	\end{eqnarray}
	where $\textup{Vol}(M):=\smallint_{M}\,d\textup{vol}_{g}$ denotes the Riemannian volume 
	of $M\,.$ In particular, the space $C_{0}^{\infty}(M)$ is a tame Fr\'echet space
	(it is a Fr\'echet space because $C_{0}^{\infty}(M)$ is closed in 
	$C^{\infty}(M)$ and it is also a 
	tame space because $C^{\infty}(M)$ is tame, see \cite{Hamilton}, 
	Definition 1.3.1 and Corollary 1.3.9).
\begin{proof}[Proof of Proposition \ref{Proposition D manifold}]
	The proof relies on the following tame diffeomorphim of tame F\'echet manifolds:
	\begin{eqnarray}
		\Phi\,:\,
		\left \lbrace
			\begin{array}{cc}
				C^{\infty}(M)\rightarrow C_{0}^{\infty}(M)\times
				\mathbb{R}\,,\\
				f\mapsto \Big(f-\textup{Vol}(M)^{-1}\int_{M}\,f\,d\textup{vol}_{g},
				\textup{Vol}(M)^{-1}\int_{M}\,f\,d\textup{vol}_{g}-\textup{Vol}(M)^{-1}\Big)\,.
			\end{array}
		\right.
	\end{eqnarray}
	Using $\Phi\,,$ it is possible to define splitting charts for $C^{\infty}(M)\,;$
	indeed, the space $C^{\infty}(M,\mathbb{R}_{+}^{*})$ being clearly an open subset of 
	$C^{\infty}(M)$ for its natural Fr\'echet 
	space topology, every $\rho\in \mathcal{D}$ possesses an open 
	neighborhood in $C^{\infty}(M)\,,$ say $U_{\rho}\,,$ 
	such that $\rho\in U_{\rho}\subseteq C^{\infty}(M,\mathbb{R}_{+}^{*})\,,$ and, 
	restricting $U_{\rho}$ if necessary, we may assume that 
	$\Phi(U_{\rho})=V_{\rho}\times W_{\rho}$ where $V_{\rho}$ and $W_{\rho}$ are open subsets of 
	$C_{0}^{\infty}(M)$ and $\mathbb{R}$ respectively. But now, 
	$(U_{\rho},\Phi\vert_{U_{\rho}})$ is a chart of $C^{\infty}(M)$ and it is 
	easy to see that 
	\begin{eqnarray}
		(\Phi\vert_{U_{\rho}})(U_{\rho}\cap \mathcal{D})=V_{\rho}\times \{0\}\,.
	\end{eqnarray}
	The proposition follows. 
\end{proof}
	We now want to give a geometrical description of the tangent 
	space of $\mathcal{D}\,.$ Recall that if $X\in \mathfrak{X}(M)$ is a vector field 
	on $M\,,$ then its divergence with respect to the volume form 
	$d\textup{vol}_{g}$ is the unique function 
	$\textup{div}(X)\,:\,M\rightarrow\mathbb{R}$ satisfying 
	$\mathscr{L}_{X}(d\textup{vol}_{g})=\textup{div}(X)\cdot d\textup{vol}_{g}\,,$
	$\mathscr{L}_{X}$ being the Lie derivative in direction $X\,.$\\
	Using the divergence operator, we define, for $f\,:\,M\rightarrow \mathbb{R}_{+}^{*}\,,$ 
	an elliptic differential operator $\textup{P}_{f}\,:\,
	C^{\infty}(M)\rightarrow C^{\infty}(M)$ 
	via the formula
	\begin{eqnarray}
		\textup{P}_{f}(u):=\textup{div}(f\cdot\nabla u)\,,
	\end{eqnarray}
	where $u\,:\,M\rightarrow \mathbb{R}$ is a smooth function. Observe that 
	\begin{description}
		\item[$\bullet$] $\textup{P}_{1}=\Delta$ is the Laplacian operator\,,
		\item[$\bullet$] $\textup{P}_{f}$ takes values in 
			$C_{0}^{\infty}(M)$ since the integral with respect to the 
			Riemannian volume form of a divergence 
			is always zero by application of Stokes' Theorem.  
		\item[$\bullet$] The kernel of $\textup{P}_{f}$ reduces to the constant functions. 
			This is due to the fact that $\textup{P}_{f}$ is a second order elliptic differential 
			operator whose constant term $\textup{P}_{f}(1)$ is zero, and 
			it is well known that for such differential operators on compact manifolds, 
			the kernel reduces to the constant functions (see \cite{Jost}).
	\end{description}
\begin{lemma}\label{lemme operateur inversible}
	For $f\in C^{\infty}(M)\,,$ $f>0\,,$ the restriction $\overline{\textup{P}}_{f}$
	of the operator $\textup{P}_{f}$ to $C_{0}^{\infty}(M)\,,$ 
	\begin{eqnarray}
		\overline{\textup{P}}_{f}\,:\,C_{0}^{\infty}(M)
		\rightarrow C_{0}^{\infty}(M)\,,
	\end{eqnarray}
	is an isomorphism of Fr\'echet spaces. Moreover, its family of inverses
	\begin{eqnarray}\label{equation inverse famille operateur elliptiques²}
		C^{\infty}(M,\mathbb{R}_{+}^{*})\times C_{0}^{\infty}(M)\rightarrow 
		C_{0}^{\infty}(M),\,\,(f,h)\mapsto (\overline{\textup{P}}_{f})^{-1}(h)
	\end{eqnarray}
	forms a smooth tame map. 
\end{lemma}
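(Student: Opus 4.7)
Injectivity of $\overline{\textup{P}}_{f}$ on $C_{0}^{\infty}(M)$ is immediate from the third bullet preceding the statement: any $u$ in the kernel lies in $\ker\textup{P}_{f}=\mathbb{R}$, but the only constant of zero mean is $0$. For surjectivity, a single integration by parts gives
\[
\int_{M}v\cdot\textup{P}_{f}(u)\,d\textup{vol}_{g}=-\int_{M}f\,\langle\nabla u,\nabla v\rangle_{g}\,d\textup{vol}_{g}=\int_{M}u\cdot\textup{P}_{f}(v)\,d\textup{vol}_{g},
\]
so $\textup{P}_{f}$ is formally self-adjoint in $L^{2}(M,d\textup{vol}_{g})$. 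Since $\textup{P}_{f}$ is a second-order elliptic operator on the compact manifold $M$, the Fredholm alternative combined with elliptic regularity yields $\textup{Im}(\textup{P}_{f})=(\ker\textup{P}_{f})^{\perp}\cap C^{\infty}(M)=\mathbb{R}^{\perp}\cap C^{\infty}(M)=C_{0}^{\infty}(M)$. The map $\overline{\textup{P}}_{f}$ is thus a continuous bijection between Fr\'echet spaces, and continuity of its inverse follows from the Banach--Schauder open mapping theorem in the Fr\'echet setting.

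\textbf{Smooth tame dependence on $f$.} To upgrade pointwise invertibility into smooth tame dependence on the parameter, the plan is to apply the Nash--Moser inverse function theorem to
\[
\Psi\,:\,C^{\infty}(M,\mathbb{R}_{+}^{*})\times C_{0}^{\infty}(M)\longrightarrow C^{\infty}(M,\mathbb{R}_{+}^{*})\times C_{0}^{\infty}(M),\qquad\Psi(f,u):=(f,\overline{\textup{P}}_{f}u).
\]
Since $\overline{\textup{P}}_{f}u=f\,\Delta u+\langle\nabla f,\nabla u\rangle_{g}$ is a differential expression bilinear and of bounded order in $(f,u)$, $\Psi$ is a smooth tame map in Hamilton's sense. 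Its differential
\[
d\Psi(f,u)(\delta f,\delta u)=\bigl(\delta f,\ \textup{div}(\delta f\cdot\nabla u)+\overline{\textup{P}}_{f}(\delta u)\bigr)
\]
is triangular with diagonal blocks the identity and $\overline{\textup{P}}_{f}$, hence pointwise invertible by the first paragraph. Granted the tame bounds discussed below, Nash--Moser then produces a smooth tame local inverse to $\Psi$; its second component, restricted to the slice $\{f\}\times C_{0}^{\infty}(M)$, is precisely the map stated in the lemma.

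\textbf{The main obstacle.} The technical heart is the tame elliptic estimate
\[
\|u\|_{n+2}\ \leq\ C_{n}\bigl(1+\|f\|_{n+r}\bigr)\bigl(\|\overline{\textup{P}}_{f}u\|_{n}+\|u\|_{0}\bigr)\qquad\text{for }u\in C_{0}^{\infty}(M),
\]
where the $\|\cdot\|_{n}$ are (equivalent to) the $C^{n}$-norms giving the standard tame grading on $C^{\infty}(M)$. I would establish this by patching together local Schauder-type inequalities with a partition of unity, keeping careful track of the polynomial dependence of the constants on finitely many derivatives of $f$; the nuisance $\|u\|_{0}$ term is then absorbed on the mean-zero subspace by a standard Peetre-type contradiction argument relying on the injectivity of $\overline{\textup{P}}_{f}$, or equivalently by invoking the spectral gap of this self-adjoint operator. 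Once the estimate is in hand, repeated differentiation of $\overline{\textup{P}}_{f}\circ(\overline{\textup{P}}_{f})^{-1}=\textup{id}$ in $(f,h)$ and induction on the order of derivative propagates the bound to every $d^{k}$ of the inverse family, which is exactly the criterion for smooth tameness. This elliptic bookkeeping -- not the invertibility itself -- is the genuine difficulty; results of precisely this flavour are treated in detail in Hamilton's \emph{Bull.\ AMS} paper and could in the end be cited directly.
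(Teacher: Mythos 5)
Your first half is correct but takes a different route from the paper for surjectivity: you use formal self-adjointness of $\textup{P}_{f}$ plus the Fredholm alternative and elliptic regularity to identify $\textup{Im}(\textup{P}_{f})$ with $(\ker \textup{P}_{f})^{\perp}\cap C^{\infty}(M)=C_{0}^{\infty}(M)$, whereas the paper deforms $f$ to $1$ and invokes homotopy invariance of the analytic index to conclude $\textup{Ind}(\textup{P}_{f})=\textup{Ind}(\Delta)=0$, then counts codimensions. Both are sound; yours is arguably more elementary and makes the self-adjoint structure explicit, while the paper's index argument avoids any appeal to $L^{2}$ duality. The injectivity and open-mapping steps coincide with the paper's.

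The second half has a structural flaw: the Nash--Moser inverse function theorem cannot be the engine here, because its hypothesis (2) --- that $(x,v)\mapsto\big(d\Psi(x)\big)^{-1}\{v\}$ be a smooth tame map --- already contains the conclusion you are after. Inverting your triangular differential $d\Psi(f,u)(\delta f,\delta u)=\big(\delta f,\ \textup{div}(\delta f\cdot\nabla u)+\overline{\textup{P}}_{f}(\delta u)\big)$ requires solving $\delta u=(\overline{\textup{P}}_{f})^{-1}\big(k-\textup{div}(\delta f\cdot\nabla u)\big)$, so verifying that hypothesis is exactly verifying the smooth tameness of the family $(f,h)\mapsto(\overline{\textup{P}}_{f})^{-1}(h)$. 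The detour through $\Psi$ therefore proves nothing new. Your fallback plan --- tame elliptic estimates with explicit polynomial dependence of the constants on finitely many derivatives of $f$, followed by differentiating $\overline{\textup{P}}_{f}\circ(\overline{\textup{P}}_{f})^{-1}=\textup{id}$ and inducting --- is the right substance, and it is precisely the content of Hamilton's Theorem 3.3.3 on smooth tame families of Green's operators for families of elliptic operators, which is what the paper cites. The paper applies that theorem not to $\overline{\textup{P}}_{f}$ on the mean-zero subspace but to the augmented operator $(f,h,x)\mapsto\big(\textup{P}_{f}(h)+x,\smallint_{M}h\,d\textup{vol}_{g}\big)$, which is invertible on the full space $C^{\infty}(M)\times\mathbb{R}$ and whose Green's operator restricts to the desired family; this sidesteps the absorption of the $\|u\|_{0}$ term that you would otherwise have to handle by hand on $C_{0}^{\infty}(M)$. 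If you drop the Nash--Moser framing and either cite Hamilton's theorem as the paper does, or carry out the Schauder bookkeeping you sketch, the argument closes.
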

\begin{proof}
	The operator $\overline{\textup{P}}_{f}$ is injective since its kernel is the intersection 
	of the kernel of ${\textup{P}}_{f}$ with the space $C_{0}^{\infty}(M)\,,$
	which is zero.		
	
	For the surjectivity, take $\tilde{f}\,:\,[0,1]\rightarrow C^{\infty}(M,\mathbb{R}_{+}^{*})$ 
	a continuous path such that $\tilde{f}_{0}\equiv 1$ and 
	$\tilde{f}_{1}=f\,.$ As one may see, 
	$\textup{P}_{\tilde{f}_{t}}$ defines a continuous path of elliptic 
	operators (acting on a suitable Sobolev space), and by the topological invariance 
	of the analytic index $\textup{Ind}$ of an elliptic operator together with the fact
	that the analytic index of $\Delta\,:\,C^{\infty}(M)\rightarrow 
	C^{\infty}(M)$ is zero, we have :
	\begin{eqnarray}
		\textup{Ind}({\textup{P}}_{f})=\textup{Ind}({\textup{P}}_{\tilde{f}_{1}})
		=\textup{Ind}({\textup{P}}_{\tilde{f}_{0}})=\textup{Ind}(\Delta)=0\,. 
	\end{eqnarray}
	Hence, the codimension of the image $\textup{Im}(\textup{P}_{f})$ of 
	$\textup{P}_{f}$ is 1, and since  $\textup{Im}(\textup{P}_{f})\subseteq 
	C_{0}^{\infty}(M)\,,$ this later space being of codimension 1, 
	$\textup{Im}(\textup{P}_{f})=C_{0}^{\infty}(M)\,.$ 
	It follows that $\overline{\textup{P}}_{f}\,:\,C_{0}^{\infty}(M)\rightarrow
	C_{0}^{\infty}(M)$ is a bijection.

	Finally, $\overline{\textup{P}}_{f}$ is continuous since it is a differential operator, 
	and its inverse is also continuous by application of the open mapping Theorem.

	The fact that the family of inverses defined in 
	\eqref{equation inverse famille operateur elliptiques²} forms a smooth tame map 
	is a consequence of a result due to Hamilton (see \cite{Hamilton}, Theorem 3.3.3) 
	about the family of inverses of a family of invertible (up to something of finite dimension)
	elliptic differential operators, applied to the following map :
	\begin{eqnarray}
		\left \lbrace
			\begin{array}{cc}
				C^{\infty}(M,\mathbb{R}_{+}^{*})\times C^{\infty}(M)\times \mathbb{R}
				\rightarrow C^{\infty}(M)\times \mathbb{R}\,,\\
				(f,h,x)\mapsto 
				\big(\textup{P}_{f}(h)+x,\smallint_{M}\,h\,d\textup{vol}_{g}\big)\,.
			\end{array}
		\right.
	\end{eqnarray}
	The result of Hamilton implies the existence of a smooth Green operator 
	$\textup{G}\,:\,C^{\infty}(M,\mathbb{R}_{+}^{*})\times C^{\infty}(M)\rightarrow
	C_{0}^{\infty}(M)$ whose restriction to $C^{\infty}(M,\mathbb{R}_{+}^{*})\times 
	C_{0}^{\infty}(M)$ coincides with the family considered in 
	\eqref{equation inverse famille operateur elliptiques²}\,.
	The lemma follows. 
\end{proof}
	%
\begin{proposition}\label{proposition identification gradient}
	Let $X\in \mathfrak{X}(M)$ be a vector field and let $\rho\in \mathcal{D}$ be a smooth 
	density. For $h\in T_{\rho}\mathcal{D}\cong C_{0}^{\infty}(M)\,,$ there exists
	a unique function $\phi\,:\,M\rightarrow\mathbb{R}$ (defined up to an additive constant), such 
	that 
	\begin{eqnarray}
		h=\textup{div}\,\big(\rho\,(\nabla\phi+X)\big)\,.
	\end{eqnarray}
	Moreover, the map 
	\begin{eqnarray}\label{equation definition de l'identification}
		T\mathcal{D}\rightarrow \mathcal{D}\times \nabla C^{\infty}(M),\,\,
		h=\textup{div}\,\big(\rho\,(\nabla\phi+X)\big)\mapsto (\rho,\nabla\phi)\,,
	\end{eqnarray}
	is a non-linear tame isomorphism of tame Fr\'echet vector bundles, 
	$\mathcal{D}\times \nabla C^{\infty}(M)$ being the trivial vector bundle over
	$\mathcal{D}\,.$
\end{proposition}
\begin{proof}
	For $\rho\in \mathcal{D}$ and $h\in T_{\rho}\mathcal{D}\cong 
	C_{0}^{\infty}(M)\,,$ 
	define $\phi\in C_{0}^{\infty}(M)$ by letting 
	\begin{eqnarray}\label{equation j'ai fain!! et oui!}
		\phi:=(\overline{P}_{\rho})^{-1}\big[\,h-\textup{div}(\rho\,X)\big]
	\end{eqnarray}
	(note that $\textup{div}(\rho\,X)\in C_{0}^{\infty}(M)\,,$ and thus 
	$h-\textup{div}(\rho\,X)\in C_{0}^{\infty}(M)$). \\
	By applying the operator $\overline{P}_{\rho}$ to \eqref{equation j'ai fain!! et oui!}, 
	we see that 
	\begin{eqnarray}
		\overline{P}_{\rho}(\phi)=h-\textup{div}(\rho\,X)\,\,\,&\Rightarrow& \,\,\,
		\textup{div}(\rho\,\nabla\phi)=h-\textup{div}(\rho\,X)\nonumber\\
		&\Rightarrow& \,\,\,h=\textup{div}\,\big(\rho\,(\nabla\phi+X)\big)\,.
	\end{eqnarray}
	Moreover, if $\phi'\,:\,M\rightarrow \mathbb{R}$ satisfies 
	$h=\textup{div}\,\big(\rho\,(\nabla\phi'+X)\,,$ then $P_{\rho}(\phi-\phi')=0\,,$ 
	and thus, $\phi-\phi'$ is a constant function. 
	The first assertion of the proposition follows.\\
	For the second assertion, it is clear that the map defined in 
	\eqref{equation definition de l'identification} is a fiber preserving bijection; its
	smoothness is a consequence of the smoothness of the family of inverses 
	\eqref{equation inverse famille operateur elliptiques²} (that one may apply in charts 
	such as defined in the proof of Proposition \ref{Proposition D manifold}, 
	or directly using the convenient calculus developed in \cite{Kriegl-Michor}); this map 
	is also tame for the same reason and its 
	inverse is clearly a smooth tame map. The proposition follows. 
\end{proof}
\begin{remark}
	The space of all gradients $\nabla C^{\infty}(M)$ is a tame Fr\'echet 
	space. This comes from the fact that the Helmholtz-Hodge decomposition 
	\begin{eqnarray}\label{equation HH}
		\mathfrak{X}(M)=
		\mathfrak{X}_{d\textup{vol}_{g}}(M)\oplus \nabla C^{\infty}(M)\,,
	\end{eqnarray}
	where 
	$\mathfrak{X}_{d\textup{vol}_{g}}(M):=\{X\in \mathfrak{X}(M)\,\vert\,\textup{div}(X)=0\}\,,$
	is a topological direct sum (see \cite{Hamilton}). As a consequence,
	the space $\mathcal{D}\times \nabla C^{\infty}(M)$ is a tame Fr\'echet space, 
	and in particular, it is a trivial tame Fr\'echet vector bundle over $\mathcal{D}\,.$
\end{remark}
\begin{remark}\label{remarque identification avec t}
	In connection with electromagnetism, if we allow the vector field $X\in \mathfrak{X}(M)$ 
	of Proposition \ref{proposition identification gradient} to be time-dependent, 
	then an obvious modification of the proof of Proposition 
	\ref{proposition identification gradient} shows that the map
	\begin{eqnarray}
		\left \lbrace
			\begin{array}{cc}
				T\mathcal{D}\times \mathbb{R}\rightarrow \mathcal{D}\times \nabla 
				C^{\infty}(M)\times \mathbb{R}\,,\nonumber\\
				\Big(\rho,h=\textup{div}\,\big(\rho\,(\nabla\phi_{t}+X_{t})\big),t\Big)\mapsto 
				(\rho,\nabla\phi_{t},t)\,,
			\end{array}
		\right.
	\end{eqnarray}
	is a smooth tame diffeomorphism. 
\end{remark}
	
\begin{remark}
	In \S\ref{subsection the category of hamitlon} and \S\ref{section manifold structure...}
	we were working in the category of tame Fr\'echet spaces, but in the sequel 
	we will relax this hypothesis and simply work with the usual Fr\'echet category. 
\end{remark}

\section{Euler-Lagrange equations on $\mathcal{D}$ and the Schr\"{o}dinger equation}
	\label{section Euler-Lagrange etc.}

	 Having a precise and geometric description of the tangent bundle of $\mathcal{D}\,,$
	it is easy to write interesting Lagrangians on $\mathcal{D}\,.$ Indeed, for a time-dependent 
	vector field $X_{t}\in\mathfrak{X}(M)\,,$ a time-dependent potential $V_{t}\,:\,M\rightarrow 
	\mathbb{R}\,,$ and using the diffeomorphism 
	$T\mathcal{D}\times \mathbb{R}\rightarrow \mathcal{D}\times \nabla 
	C^{\infty}(M)\times \mathbb{R}$ of Remark \ref{remarque identification avec t},
	we can consider, with an abuse of notation, the following time-dependent Lagrangian :
	\begin{eqnarray}\label{equation definition bis du Lagrangien}
		&&{\mathcal{L}}\Big(\rho,h=\textup{div}\,\big(\rho\,(\nabla\phi_{t}+X_{t})\big),t\Big)=
			{\mathcal{L}}_{}(\rho,\nabla\phi_{t},t):=\nonumber\\
		&&\int_{M}\,\Big(\dfrac{1}{2}\,\|\nabla\phi_{t}\|^{2}-\|X_{t}\|^{2}-V_{t}
			\Big)\,\rho\cdot d\textup{vol}_{g}
			-\dfrac{\hslash^{2}}{2}\,\int_{M}\,\big\|\nabla(\sqrt{\rho}\,)\big\|^{2}\cdot 
			d\textup{vol}_{g}\,.\,\,\,\,\,\,\,\,\text{}
	\end{eqnarray}
	Note that ${\mathcal{L}}$ is smooth by application of the convenient calculus together 
	with Remark \ref{remarque identification avec t}.\\

	By using the formula 
	\begin{eqnarray}
		\dfrac{1}{4}\,\dfrac{\|\nabla u\|^{2}}{u^{2}}
			-\dfrac{1}{2}\,\dfrac{\Delta u}{u}=-\dfrac{\Delta\big(\sqrt{u}\big)}{\sqrt{u}}\,,
	\end{eqnarray}
	which is valid for every smooth function $u\,:\,M\rightarrow \mathbb{R}\,,$ and by doing 
	an usual fixed end-point variation of the Lagrangian ${\mathcal{L}}\,,$ one easily 
	finds the following Euler-Lagrange equations :

\begin{proposition}\label{proposition les Euler-Lagrange}
	The Euler-Lagrange equations associated to the Lagrangian ${\mathcal{L}}_{}$ defined 
	in \eqref{equation definition bis du Lagrangien}, are given by 
	\begin{eqnarray}\label{equation les equations d'euler-lagrange, enfin!!!}	
		\left\lbrace
			\begin{array}{ccc}
				\dfrac{\partial \phi}{\partial t}&=&\dfrac{1}{2}\,\|\nabla\phi+X\|^{2}+V
				-\dfrac{\hslash^{2}}{2}\,\dfrac{\Delta\,\big(\sqrt{\rho}\,\big)}
				{\sqrt{\rho}}+c_{t}\,,\\
				\dfrac{\partial \rho}{\partial t}&=&\textup{div}\,
				\big(\rho\,(\nabla\phi+X)\big)\,,
			\end{array}
		\right.
	\end{eqnarray}
	where $\rho\,:\,I\subseteq \mathbb{R}\rightarrow \mathcal{D}$ 
	is a smooth curve in $\mathcal{D}$ and where 
	$c_{t}$ is a time-dependent constant. 
\end{proposition}
\begin{remark}
	The second equation in \eqref{equation les equations d'euler-lagrange, enfin!!!} has 
	actually nothing to do with variational principles; its is only the geometric way 
	to express tangent vectors in $\mathcal{D}\,,$ such as described in Proposition 
	\ref{proposition identification gradient}.
\end{remark}
\begin{remark}
	Due to similarities with equations of hydrodynamical type, the system of equations 
	\eqref{equation les equations d'euler-lagrange, enfin!!!} is sometimes referred to as the \textit{hydrodynamical formulation 
	of quantum mechanics}. 
\end{remark}
	Note that the appearance of the time-dependant constant $c_{t}$ in 
	\eqref{equation les equations d'euler-lagrange, enfin!!!} is due to the $L^{2}$-orthogonal 
	decomposition \eqref{equation decomposition L2}\,.

\begin{remark}\label{remarque constante pas constante}
	By doing the change of variable $\phi':=\phi-\smallint c_{t}\,dt$ if necessary, one may 
	assume that the time-dependant constant $c_{t}$ of Proposition 
	\ref{proposition les Euler-Lagrange} is zero. 
\end{remark}
	As it is well known, if $c_{t}\equiv 0\,,$ then the system 
	\eqref{equation les equations d'euler-lagrange, enfin!!!} is equivalent to 
	the Schr\"{o}dinger equation for a quantum charged particle in an electromagnetic field:
	\begin{eqnarray}\label{equation schrodinger equation!!! yep!}
		i\hslash \,\dfrac{\partial \psi}{\partial t}=-\dfrac{\hslash^{2}}{2}\,\Delta\,\psi
		-\dfrac{\hslash}{i}\,g(X,\nabla\phi)+\dfrac{1}{2}\,\Big(-\dfrac{\hslash}{i}\,
		\textup{div}\,(X)+\|X\|^{2}\Big)\,\psi+V\psi\,,
	\end{eqnarray}
	where 
	\begin{eqnarray}\label{pas allemagne}
		\psi:=\sqrt{\rho}\,e^{-\frac{i}{\hslash}\phi}\,.
	\end{eqnarray}
	Using Remark \ref{remarque constante pas constante}, we can thus state the following 
	corollary.
\begin{corollary}\label{corollaire du vraie lagrangien comme on aime!}
	Let $\rho$ be a solution in $\mathcal{D}$ of the Euler-Lagrange equations associated to 
	the Lagrangian ${\mathcal{L}}_{}\,:\,T\mathcal{D}\rightarrow \mathbb{R}$ 
	(see \eqref{equation definition bis du Lagrangien}), with
	$\partial\rho/\partial t=\textup{div}\,\big(\rho\,(\nabla\phi+X)\big)\,.$
	Then the wave function associated to $\rho\,,$
	\begin{eqnarray}
		\psi:=\sqrt{\rho}\,e^{-\frac{i}{\hslash}\big(\phi-\smallint c_{t}dt\big)}\,,
	\end{eqnarray}
	(see \eqref{equation les equations d'euler-lagrange, enfin!!!} for the definition of $c_{t}$),
	satisfies the Schr\"{o}dinger equation \eqref{equation schrodinger equation!!! yep!}\,.
\end{corollary}
\begin{remark}\label{remarque sur l'extension de tau}
			For a smooth function $\psi\,:\,M\rightarrow \mathbb{C}\,,$ let us 
			denote by $[\psi]$ the complex line generated by $\psi$ in 
			the complex Hilbert space $\mathcal{H}:=L^{2}(M,\mathbb{C})$ (the latter being 
			endowed with its natural $L^{2}$-scalar product). Let us also consider the following map 
			\begin{eqnarray}
				T\,:\,T\mathcal{D}\rightarrow \mathbb{P}(\mathcal{H})\,,\,\,\,\,\,
				\,\,\,(\rho,\nabla\phi)\mapsto \big[\sqrt{\rho}\,e^{-\frac{i}{\hbar}\phi}\big]\,,
			\end{eqnarray}
			where $\mathbb{P}(\mathcal{H})$ denotes the complex projective space of 
			complex lines in $\mathcal{H}\,.$ \\
			As one may easily see, this map is well defined, and since  			
			\begin{eqnarray}
				\big[\sqrt{\rho}\,e^{-\frac{i}{\hslash}\big(\phi-\smallint c_{t}dt\big)}\big]=
				\big[\sqrt{\rho}\,e^{-\frac{i}{\hslash}\phi}\big]\,,
			\end{eqnarray} 
			Corollary \ref{corollaire du vraie lagrangien comme on aime!} implies that 
			$T$ maps solutions of the Euler-Lagrange equations 
			\eqref{equation les equations d'euler-lagrange, enfin!!!} to 
			solutions of the Schr\"{o}dinger equation \eqref{equation schrodinger equation!!! yep!}\,,
			projected on $\mathbb{P}(\mathcal{H})\,.$ 
\end{remark}

\section{Hamiltonian formulation}\label{section Hamiltonian}
\label{section Hamiltonian formulation}						
	In this section, we continue our study of the dynamics of a quantum particle initiated in 
	\S\ref{section Euler-Lagrange etc.}, but we will now focus on the Hamiltonian formulation. \\
	We will still assume that $(M,g)$ is a compact, connected and oriented Riemannian manifold, but for simplicity, 
	we will assume that the particle is only under the influence of a time-independent 
	potential $V\,:\,M\rightarrow \mathbb{R}\,.$\\

	Usually, the Hamiltonian formulation of a Lagrangian system is obtained by pulling back
	the canonical symplectic form of the cotangent bundle of the configuration manifold 
	via the Legendre transform (see \cite{Abraham-Marsden}). In our case, the configuration 
	manifold $\mathcal{D}$ being infinite dimensional, its
	cotangent bundle is no more a Fr\'echet manifold, rather a manifold 
	modelled on more general locally convex topological spaces, and we thus want to avoid it. \\
	To this end, we observe (see Corollary \ref{corollaire du vraie lagrangien comme on aime!})
	that the Lagrangian which describes a quantum particle under the influence 
	of a potential is given by 
	\begin{eqnarray}\label{definition du Lagrangien simplifie}
		{\mathcal{L}}(\rho,\nabla \phi)=\dfrac{1}{2}\int_{M}\,\|\nabla\phi_{}\|^{2}\,
			\rho\,\cdot d\textup{vol}_{g}-\int_{M}\,V\,\rho\,\cdot d\textup{vol}_{g}
			-\dfrac{\hslash^{2}}{2}\,\int_{M}\,\big\|\nabla(\sqrt{\rho}\,)\big\|^{2}\,\cdot 
			d\textup{vol}_{g}\,,\,\,\,\,\,\,\,\,\text{}
	\end{eqnarray}
	where $(\rho,\nabla\phi)\in \mathcal{D}\times \nabla C^{\infty}(M)\,.$\\
	This Lagrangian is of the form kinetic energy minus two potential terms, and,
	heuristically at least, this implies that the associated Lagrangian symplectic 
	form\footnote{Recall that the Lagrangian symplectic form $\omega_{{L}}\in \Omega^{2}(TM)$ 
	associated to a Lagrangian 
	$L\,:\,TM\rightarrow \mathbb{R}\,,$ is 
	the pull back via the Legendre transform $\mathbb{F}L\,:\,TM\rightarrow T^{*}M\,,$ 
	$\mathbb{F}L(u_{x})(v_{x}):=\frac{d}{dt}\vert_{0}L(u_{x}+tv_{x})\,,$
	of the canonical symplectic form $\omega=:-d\theta$ of the 
	cotangent bundle, where the canonical 1-form $\theta\in \Omega^{1}(T^{*}M)$ is defined, 
	for $A_{\alpha_{x}}\in T_{\alpha_{x}}T^{*}M\,,$ by $\theta_{\alpha_{x}}(A_{\alpha_{x}}):=
	\alpha_{x}\big((\pi^{T^{*}M})_{*_{\alpha_{x}}}A_{\alpha_{x}}\big)\,,$ 
	$\pi^{T^{*}M}\rightarrow M$ being the canonical projection, 
	see \cite{Abraham-Marsden}.} on $T\mathcal{D}$ 
	is uniquely determined by the metric 
	\begin{eqnarray}\label{equation definition la une forme}
		(g^{\mathcal{D}})_{\rho}\big((\rho,\nabla\phi),(\rho,\nabla\phi')\big):=\int_{M}\,
		g(\nabla \phi,\nabla\phi')\,\rho\,\cdot d\textup{vol}_{g}\,,
	\end{eqnarray}
	since potentials vanish under the Legendre transform. This motivates us, by mimicking the 
	finite dimensional construction, to define 
	the canonical 1-form $\Theta_{{\mathcal{L}}}$ on $T\mathcal{D}\cong \mathcal{D}\times
	\nabla C^{\infty}(M)$ via the formula :
	\begin{eqnarray}\label{equation je me suis gourré!!}
		(\Theta_{{\mathcal{L}}})_{(\rho,\nabla \phi)}(A_{(\rho,\nabla \phi)}):=(g^{\mathcal{D}})_{\rho}\Big(\nabla\phi,
		(\pi^{T\mathcal{D}})_{*_{(\rho,\nabla \phi)}}A_{(\rho,\nabla \phi)}\Big)\,,
	\end{eqnarray}
	where $A_{(\rho,\nabla \phi)}\in T_{(\rho,\nabla \phi)}T\mathcal{D}$ and where 
	$\pi^{T\mathcal{D}}\,:\,T\mathcal{D}\rightarrow \mathcal{D}$ denotes the canonical 
	projection.\\
	The right hand side of \eqref{equation je me suis gourré!!} is formally the pull back 
	of the canonical 1-form of the full cotangent bundle $T^{*}\mathcal{D}$
	via the Legendre transform associated to ${\mathcal{L}}\,.$ We will not explain this 
	point any further, but we will consider \eqref{equation je me suis gourré!!} as 
	the starting point of our study of the Hamiltonian description of a quantum particle. \\

	Our aim is now to compute explicitly the differential of $\Theta_{{\mathcal{L}}}\,,$ and to show that 
	$\Omega_{{\mathcal{L}}}:=-d \Theta_{{\mathcal{L}}}$ is a symplectic form on $T\mathcal{D}\,.$ To this end, we will use 
	the following identification 
	\begin{eqnarray}\label{identification espaces densites}
		T(T\mathcal{D})\cong \mathcal{D}\times \nabla C^{\infty}(M)
			\times\nabla C^{\infty}(M)\times\nabla C^{\infty}(M)\,,
	\end{eqnarray}
	the diffeomorphism being given by 
	\begin{eqnarray}
		\dfrac{d}{dt}\,\bigg\vert_{0}\,\big(\rho_{t},\nabla\phi+t\nabla\psi_{2}\big)\mapsto 
		(\rho_{0},\nabla\phi,\nabla\psi_{1},\nabla\psi_{2})\,,
	\end{eqnarray}
	where $\rho_{t}$ is a smooth curve in $\mathcal{D}$ satisfying 
	\begin{eqnarray}
		\dfrac{d}{dt}\,\bigg\vert_{0}\,\rho_{t}=\textup{div}\,(\rho_{0}\cdot \nabla\psi_{1})\,.
	\end{eqnarray}	
	Using \eqref{equation definition la une forme} and \eqref{identification espaces densites}, 
	it is clear that 
	\eqref{equation je me suis gourré!!} may be rewritten 
	\begin{eqnarray}\label{equation forme explicite de theta}
		(\Theta_{{\mathcal{L}}})_{(\rho,\nabla\phi)}\big(\rho,\nabla\phi,\nabla\psi_{1},\nabla\psi_{2}\big)
		=\int_{M}\,g(\nabla\phi,\nabla\psi_{1})\,\rho\cdot d\textup{vol}_{g}\,.
	\end{eqnarray}
	Our strategy to compute the differential of $\Theta_{{\mathcal{L}}}$ at a point $(\rho,\nabla\phi)\,,$ 
	will be to use the formula 
	\begin{eqnarray}\label{equation differential 1 form}
		(d\Theta_{{\mathcal{L}}})_{(\rho,\nabla\phi)}(X,Y)
		=X_{(\rho,\nabla\phi)}\big(\Theta_{{\mathcal{L}}}(Y)\big)-
		Y_{(\rho,\nabla\phi)}\big(\Theta_{{\mathcal{L}}}(X)\big)-
		(\Theta_{{\mathcal{L}}})_{(\rho,\nabla\phi)}([X,Y])\,,
	\end{eqnarray}
	where $X,Y$ are vector fields on $T\mathcal{D}\,.$ \\
	As the above formula is tensorial in $X$ and $Y\,,$ 
	we are free to choose $X$ and $Y$ arbitrary at a given point 
	$(\rho,\nabla\phi)\,,$ and to extend
	these vector fields as simply as possible elsewhere. A natural choice is to set, 
	for any $(\rho,\nabla\phi)\in T\mathcal{D}\,,$
	\begin{eqnarray}\label{equation definition X Y}
		X_{(\rho,\nabla\phi)}:=(\rho,\nabla\phi,\nabla\psi_{1},\nabla\psi_{2})\,\,\,\,\,
		\textup{and}\,\,\,\,\,
		Y_{(\rho,\nabla\phi)}:=(\rho,\nabla\phi,\nabla\alpha_{1},\nabla\alpha_{2})\,,
	\end{eqnarray}
	where $\nabla\psi_{1},\nabla\psi_{2},\nabla\alpha_{1},\nabla\alpha_{2}$ are held fixed.\\
	In view of \eqref{equation differential 1 form}, we now have to 
	compute $X_{(\rho,\nabla\phi)}\big(\Theta_{{\mathcal{L}}}(Y)\big)$ and 
	$(\Theta_{{\mathcal{L}}})_{(\rho,\nabla\phi)}([X,Y])\,,$ with $X$ and $Y$ as defined in 
	\eqref{equation definition X Y}\,.
\begin{lemma}\label{lemma derivée le long de X}
	We have :
	\begin{eqnarray}
		X_{(\rho,\nabla\phi)}\big(\Theta_{{\mathcal{L}}}(Y)\big)
			=\int_{M}\,g(\nabla\phi,\nabla\alpha_{1})\,
			\textup{div}(\rho\cdot\nabla\psi_{1})\cdot d\textup{vol}_{g}+\int_{M}
			\,g(\nabla\psi_{2},\nabla\alpha_{1})\,
			\rho\cdot d\textup{vol}_{g}\,.
	\end{eqnarray}
\end{lemma}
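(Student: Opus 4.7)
The plan is to evaluate the directional derivative $X_{(\rho,\nabla\phi)}\bigl(\Theta_{\mathcal{L}}(Y)\bigr)$ by producing an explicit curve through $(\rho,\nabla\phi)$ with initial tangent vector $X_{(\rho,\nabla\phi)}$, pulling $\Theta_{\mathcal{L}}(Y)$ back along this curve, and differentiating at $t=0$ under the integral sign.

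More precisely, first I would pick a smooth curve $t\mapsto \rho_{t}$ in $\mathcal{D}$ with $\rho_{0}=\rho$ and $\frac{d}{dt}\bigl|_{0}\rho_{t}=\textup{div}(\rho\cdot\nabla\psi_{1})$; under the identification \eqref{identification espaces densites}, the curve
$$t\longmapsto \bigl(\rho_{t},\nabla\phi+t\nabla\psi_{2}\bigr)$$
in $T\mathcal{D}$ starts at $(\rho,\nabla\phi)$ and has tangent vector $(\rho,\nabla\phi,\nabla\psi_{1},\nabla\psi_{2})=X_{(\rho,\nabla\phi)}$ at $t=0$. Using the extension of $Y$ defined in \eqref{equation definition X Y}, the value of the scalar function $\Theta_{\mathcal{L}}(Y)$ along this curve reads, by \eqref{equation forme explicite de theta},
$$\bigl(\Theta_{\mathcal{L}}(Y)\bigr)\bigl(\rho_{t},\nabla\phi+t\nabla\psi_{2}\bigr)=\int_{M}g\bigl(\nabla\phi+t\nabla\psi_{2},\nabla\alpha_{1}\bigr)\,\rho_{t}\cdot d\textup{vol}_{g}.$$

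Next I would differentiate at $t=0$. Since the integrand is linear in $(\rho_{t},\nabla\phi+t\nabla\psi_{2})$, the product rule yields two contributions: one coming from the derivative of $\rho_{t}$, which by construction equals $\textup{div}(\rho\cdot\nabla\psi_{1})$, and one coming from the derivative of $\nabla\phi+t\nabla\psi_{2}$, which equals $\nabla\psi_{2}$. This immediately produces the claimed identity
$$X_{(\rho,\nabla\phi)}\bigl(\Theta_{\mathcal{L}}(Y)\bigr)=\int_{M}g(\nabla\phi,\nabla\alpha_{1})\,\textup{div}(\rho\cdot\nabla\psi_{1})\cdot d\textup{vol}_{g}+\int_{M}g(\nabla\psi_{2},\nabla\alpha_{1})\,\rho\cdot d\textup{vol}_{g}.$$

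I do not anticipate any serious obstacle: the potentially delicate step is justifying the differentiation under the integral, but this is routine here because $M$ is compact, everything is smooth in $(t,x)$ jointly, and the convenient calculus (together with Remark \ref{remarque identification avec t}) ensures that the computation is legitimate. Note that only the $\nabla\psi_{1}$-component of $X$ and only the $\nabla\alpha_{1}$-component of $Y$ enter the answer, as expected from the definition \eqref{equation je me suis gourré!!} of $\Theta_{\mathcal{L}}$, which depends only on the base-point image under $\pi^{T\mathcal{D}}_{*}$.
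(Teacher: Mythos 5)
Your proposal is correct and follows essentially the same route as the paper: choose a curve $(\rho_{t},\nabla\phi+t\nabla\psi_{2})$ with $\dot\rho_{0}=\textup{div}(\rho\cdot\nabla\psi_{1})$, evaluate $\Theta_{\mathcal{L}}(Y)$ along it via \eqref{equation forme explicite de theta}, and differentiate under the integral at $t=0$. The only cosmetic difference is that the paper imposes $\partial\rho_{t}/\partial t=\textup{div}(\rho_{t}\cdot\nabla\psi_{1})$ for all $t$ rather than just at $t=0$, which is immaterial for this first-derivative computation.
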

\begin{proof}
	Let $\rho_{t}$ be a curve in $\mathcal{D}$ satisfying 
	\begin{eqnarray}
		\rho_{0}=\rho\,\,\,\,\,\textup{and}\,\,\,\,\,
		\dfrac{\partial \rho_{t}}{\partial t}=\textup{div}\,(\rho_{t}\cdot\nabla\psi_{1})\,.
	\end{eqnarray}
	If $c(t):=(\rho_{t},\nabla\phi+t\nabla\psi_{2})\,,$ then
	\begin{eqnarray}
		\dot{c}(0)=(\rho,\nabla\phi,\nabla\psi_{1},\nabla\psi_{2})=X_{(\rho,\nabla\phi)}\,,
	\end{eqnarray}
	and thus, 
	\begin{eqnarray}
		&&X_{(\rho,\nabla\phi)}\big(\Theta_{{\mathcal{L}}}(Y)\big)\nonumber\\
		&=&\dfrac{d}{dt}\bigg\vert_{0}\,(\Theta_{{\mathcal{L}}})_{c(t)}(Y_{c(t)})
			=\dfrac{d}{dt}\bigg\vert_{0}\,\Theta_{{\mathcal{L}}}\big(\rho_{t},\nabla\phi+t\nabla\psi_{2},\nabla\alpha_{1},
			\nabla\alpha_{2}\big)\nonumber\\
		&=&\dfrac{d}{dt}\bigg\vert_{0}\,\bigg[\int_{M}\,g\big(\nabla\phi,\nabla\alpha_{1}\big)
			\rho_{t}\cdot d\textup{vol}_{g}+t\int_{M}\,g\big(\nabla\psi_{2},\nabla\alpha_{1}\big)\rho_{t}
			\cdot d\textup{vol}_{g}\bigg]\nonumber\\
		&=&\int_{M}\,g\big(\nabla\phi,\nabla\alpha_{1}\big)\,\textup{div}\,
			(\rho\cdot\nabla\psi_{1})\cdot d\textup{vol}_{g}+\int_{M}\,
			g\big(\nabla\psi_{2},\nabla\alpha_{1}\big)\rho\cdot d\textup{vol}_{g}\,.\,\,\,\,\,\,\,\textbf{}
	\end{eqnarray}
	The lemma follows. 
\end{proof}
	For the term $(\Theta_{{\mathcal{L}}})_{(\rho,\nabla\phi)}([X,Y])\,,$ we need to compute the Lie bracket $[X,Y]$ of $X$
	and $Y\,,$ and this can be done with a good description of the flow $\varphi^{X}_{t}$ of $X\,.$ \\
	This description may be obtained with the following map 
	\begin{eqnarray}
		D\,:\,\textup{Diff}(M)\rightarrow C^{\infty}(M)\,,
	\end{eqnarray}
	which is defined, for a $\varphi$ belonging to the group of all diffeomorphims
	$\textup{Diff}(M)\,,$ via the formula 
	\begin{eqnarray}\label{equation definition D}
		\varphi^{*}d\textup{vol}_{g}=D(\varphi)\cdot d\textup{vol}_{g}\,.
	\end{eqnarray}
	As a matter of notation, we shall write $D(\varphi)=\varphi^{*}d\textup{vol}_{g}/d\textup{vol}_{g}\,.$\\
	It may be shown that the map $D$ is smooth (see \cite{Hamilton}), and that 
	\begin{eqnarray}\label{equation proprietes de D}
		D(\varphi\circ \psi)=D(\varphi)\circ\psi\cdot D(\psi)\,,
	\end{eqnarray}
	where $\varphi,\psi\in \textup{Diff}(M)\,.$\\
	Observe also that if a diffeomorphism $\varphi$ preserves the orientation of $(M,d\textup{vol}_{g})\,,$ 
	then 
	$1/\textup{Vol}(M)\cdot D(\varphi)\in \mathcal{D}\,.$
\begin{lemma}\label{lemma le flowwww}
	The flow $\varphi^{X}_{t}$ of $X\,,$ is given, for $\rho\in\mathcal{D}$ and $\nabla\phi\in 
	\nabla C^{\infty}(M)\,,$ by
	\begin{eqnarray}\label{le flow}
		\varphi_{t}^{X}(\rho,\nabla\phi):=\bigg(\dfrac{1}{\textup{Vol}(M)}\cdot 
		D(\varphi\circ \varphi_{t}^{\nabla\psi_{1}}),\nabla\phi+t\nabla\psi_{2}\bigg)\,,
	\end{eqnarray}
	where $\varphi\in \textup{Diff}(M)$ is chosen such that $D(\varphi)=\textup{Vol}(M)\rho$ (such $\varphi$
	necessarily exits according to Moser's Theorem). 
\end{lemma}
\begin{proof}
	According to \eqref{equation definition D}, we have :
	\begin{eqnarray}\label{equation apareil photo}
		&&D(\varphi\circ\varphi_{t}^{\nabla\psi_{1}})\cdot d\textup{vol}_{g}
			=(\varphi_{t}^{\nabla\psi_{1}})^{*}\varphi^{*}d\textup{vol}_{g}=
			(\varphi_{t}^{\nabla\psi_{1}})^{*}D(\varphi)\cdot d\textup{vol}_{g}\nonumber\\
		&\Rightarrow& D(\varphi\circ\varphi_{t}^{\nabla\psi_{1}})\cdot d\textup{vol}_{g}=
			(\varphi_{t}^{\nabla\psi_{1}})^{*}D(\varphi)\cdot d\textup{vol}_{g}\nonumber\\
		&\Rightarrow& \dfrac{d}{dt}D(\varphi\circ\varphi_{t}^{\nabla\psi_{1}})\cdot d\textup{vol}_{g}
			=\mathscr{L}_{\nabla\psi_{1}}\Big((\varphi_{t}^{\nabla\psi_{1}})^{*}D(\varphi)\cdot  
			d\textup{vol}_{g}\Big)\,,
	\end{eqnarray}
	and, in view of \eqref{equation proprietes de D},
	\begin{eqnarray}\label{equation photo 2}
		&&\mathscr{L}_{\nabla\psi_{1}}\Big((\varphi_{t}^{\nabla\psi_{1}})^{*}D(\varphi)\cdot  
			d\textup{vol}_{g}\Big)\nonumber\\
		&=&\mathscr{L}_{\nabla\psi_{1}}\Big((D(\varphi)\circ\varphi_{t}^{\nabla\psi_{1}})\cdot 
			D\big(\varphi_{t}^{\nabla\psi_{1}}\big)\cdot  
			d\textup{vol}_{g}\Big)\nonumber\\
		&=&\mathscr{L}_{\nabla\psi_{1}}\Big(D(\varphi\circ\varphi_{t}^{\nabla\psi_{1}})\cdot 
			d\textup{vol}_{g}\Big)\nonumber\\
		&=&\Big(g\big(\nabla\psi_{1},\nabla D(\varphi\circ\varphi_{t}^{\nabla\psi_{1}})\big)+
			D(\varphi\circ\varphi_{t}^{\nabla\psi_{1}})\,\textup{div}(\nabla\psi_{1})\Big)\cdot 
			d\textup{vol}_{g}\nonumber\\
		&=&\textup{div}\,\Big(D(\varphi\circ\varphi_{t}^{\nabla\psi_{1}})\cdot \nabla\psi_{1}\Big)\cdot 
			d\textup{vol}_{g}\,.
	\end{eqnarray}
	Collecting \eqref{equation apareil photo} and \eqref{equation photo 2}, we thus get 
	\begin{eqnarray}\label{equation propriete de D}
		\dfrac{d}{dt}D(\varphi\circ \varphi_{t}^{\nabla\psi_{1}})=
		\textup{div}\,\Big(D(\varphi\circ\varphi_{t}^{\nabla\psi_{1}})\cdot \nabla\psi_{1}\Big)\,, 
	\end{eqnarray}
	from which we see, having in mind the identification \eqref{identification espaces densites}, that 
	\begin{eqnarray}\label{equation ou est timot}
		&&\dfrac{d}{dt}\bigg(\dfrac{1}{\textup{Vol}(M)}\cdot 
			D(\varphi\circ \varphi_{t}^{\nabla\psi_{1}}),\nabla\phi+t\nabla\psi_{2}\bigg)\nonumber\\
		&=&\bigg(\dfrac{1}{\textup{Vol}(M)}\cdot 
			D(\varphi\circ \varphi_{t}^{\nabla\psi_{1}}),\nabla\phi+t\nabla\psi_{2},
			\nabla\psi_{1},\nabla\psi_{2}\bigg)\,.
	\end{eqnarray}
	Equation \eqref{equation ou est timot} exactly means that $\varphi_{t}^{X}\,,$ such as defined in 
	\eqref{le flow}, is the flow of $X\,.$ The lemma follows. 
\end{proof}
	We are now almost able to compute the Lie bracket $[X,Y]\,.$ But for this, we still need, 
	for $\rho\in \mathcal{D}\,,$ the following continuous map of Fr\'echet spaces  
	\begin{eqnarray}\label{equation HH decomposition}
		\mathbb{P}_{\rho}\,:\,
		\left \lbrace
			\begin{array}{cc}
				\mathfrak{X}(M)=\mathfrak{X}_{d\textup{vol}_{g}}(M)\oplus \rho\nabla C^{\infty}(M)
				\rightarrow \nabla C^{\infty}(M)\,,\\
				X=\overline{X}+\rho\nabla\phi\mapsto \nabla\phi\,,
			\end{array}
		\right. 
	\end{eqnarray}
	where $\overline{X}\in \mathfrak{X}_{d\textup{vol}_{g}}(M)=
	\{Z\in\mathfrak{X}(M)\,\vert\,\textup{div}\,(Z)=0\}\,,$ and where the topological direct sum 
	$\mathfrak{X}(M)=\mathfrak{X}_{d\textup{vol}_{g}}(M)
	\oplus \rho\nabla C^{\infty}(M)$ is simply 
	a slight generalisation of the Helmholtz-Hodge decomposition (see \eqref{equation HH} and 
	\cite{Molitor-Yang-Mills} for a proof of this generalization).  
\begin{remark}\label{j'en ai marre de ce papier!!}
	Using Stokes' Theorem, it is easy to show the following convenient formula : 
	\begin{eqnarray}
		\int_{M}\,g\big(\nabla\phi,\mathbb{P}_{\rho}(\rho X)\big)\,\rho\cdot d\textup{vol}_{g}=
		\int_{M}\,g(\nabla\phi,X)\,\rho\cdot d\textup{vol}_{g}\,,
	\end{eqnarray}
	where $\phi\in C^{\infty}(M)$ and where $X\in \mathfrak{X}(M)\,.$
\end{remark}
\begin{lemma}\label{lemma le lie bracket!!!}
	For $\rho\in \mathcal{D}$ and $\nabla\phi\in \nabla C^{\infty}(M)\,,$ we have :
	\begin{eqnarray}
		[X,Y]_{(\rho,\nabla\phi)}=\Big(\rho,\nabla\phi,
		\mathbb{P}_{\rho}\big(\rho\,[\nabla\alpha_{1},\nabla\psi_{1}]\big),0\Big)\,.
	\end{eqnarray}
\end{lemma}
\begin{proof}
	Let us choose $\varphi,\psi_{t},\beta_{t,s}\in \textup{Diff}(M)$ such that 
	\begin{eqnarray}\label{equation que dire manque de logique}
		D(\varphi)=\textup{Vol}(M)\cdot \rho\,,\,\,\,D(\psi_{t})=
		D(\varphi\circ \varphi_{t}^{\nabla\psi_{1}})\,,\,\,\,
		D(\beta_{t,s})=D(\psi_{t}\circ\varphi_{s}^{\nabla\alpha_{1}})\,.
	\end{eqnarray}
	According to Lemma \ref{lemma le flowwww}, we have 
	\begin{eqnarray}\label{equation il fait beau, bientot le japon}
		&&[X,Y]_{(\rho,\nabla\phi)}=\dfrac{d}{dt}\bigg\vert_{0}\bigg(
			(\varphi_{-t}^{X})_{*_{\varphi_{t}^{X}(\rho,\nabla\phi)}}Y_{\varphi_{t}^{X}
			(\rho,\nabla\phi)}\bigg)\nonumber\\
		&=&\dfrac{d}{dt}\bigg\vert_{0}\dfrac{d}{ds}\bigg\vert_{0}\Big(
			\varphi_{-t}^{X}\circ \varphi_{s}^{Y}\circ \varphi_{t}^{X}\Big)(\rho,\nabla\phi)\nonumber\\
		&=&\dfrac{d}{dt}\bigg\vert_{0}\dfrac{d}{ds}\bigg\vert_{0}\Big(
			\varphi_{-t}^{X}\circ \varphi_{s}^{Y}\Big)\bigg(\dfrac{1}{\textup{Vol}(M)}\cdot D(\varphi\circ
			\varphi_{t}^{\nabla\psi_{1}}),\nabla\phi+t\nabla\psi_{2}\bigg)\nonumber\\
		&=&\dfrac{d}{dt}\bigg\vert_{0}\dfrac{d}{ds}\bigg\vert_{0}
			\varphi_{-t}^{X}\bigg(\dfrac{1}{\textup{Vol}(M)}\cdot D(\psi_{t}\circ
			\varphi_{s}^{\nabla\alpha_{1}}),\nabla\phi+t\nabla\psi_{2}+s\nabla\alpha_{2}\bigg)\nonumber\\
		&=&\dfrac{d}{dt}\bigg\vert_{0}\dfrac{d}{ds}\bigg\vert_{0}
			\bigg(\dfrac{1}{\textup{Vol}(M)}\cdot D(\beta_{t,s}\circ
			\varphi_{-t}^{\nabla\psi_{1}}),\nabla\phi+
			t\nabla\psi_{2}+s\nabla\alpha_{2}-t\nabla\psi_{2}\bigg)\nonumber\\
		&=&\dfrac{d}{dt}\bigg\vert_{0}\dfrac{d}{ds}\bigg\vert_{0}
			\bigg(\dfrac{1}{\textup{Vol}(M)}\cdot D(\beta_{t,s}\circ
			\varphi_{-t}^{\nabla\psi_{1}}),\nabla\phi+s\nabla\alpha_{2}\bigg)\,.
	\end{eqnarray}
	From \eqref{equation il fait beau, bientot le japon}, we already see that the bracket 
	$[X,Y]_{(\rho,\nabla\phi)}$ is of the form $(\rho,\nabla\phi,\ast,0)\,,$ where ``\,$\ast$\," has to be  
	determined by computing the derivatives of $D(\beta_{t,s}\circ\varphi_{-t}^{\nabla\psi_{1}})$ 
	with respect to $s$ and $t\,,$ and by putting it in a divergence form. \\
	Using \eqref{equation proprietes de D} and \eqref{equation que dire manque de logique}\,, we see that
	\begin{eqnarray}
		D(\beta_{t,s}\circ \varphi_{-t}^{\nabla\psi_{1}})&=&D(\beta_{t,s})\circ \varphi_{-t}^{\nabla\psi_{1}}\cdot
			D(\varphi_{-t}^{\nabla\psi_{1}})=D(\psi_{t}\circ \varphi_{s}^{\nabla\alpha_{1}})
			\circ \varphi_{-t}^{\nabla\psi_{1}}\cdot
			D(\varphi_{-t}^{\nabla\psi_{1}})\nonumber\\
		&=&D(\psi_{t})\circ \varphi_{s}^{\nabla\alpha_{1}}\circ \varphi_{-t}^{\nabla\psi_{1}}\cdot 
			D(\varphi_{s}^{\nabla{\alpha_{1}}})\circ
			\varphi_{-t}^{\nabla\psi_{1}}\cdot D(\varphi_{-t}^{\nabla\psi_{1}})\nonumber\\
		&=&D(\varphi\circ\varphi_{t}^{\nabla\psi_{1}})\circ 
			\varphi_{s}^{\nabla\alpha_{1}}\circ \varphi_{-t}^{\nabla\psi_{1}}\cdot 
			D(\varphi_{s}^{\nabla{\alpha_{1}}})\circ
			\varphi_{-t}^{\nabla\psi_{1}}\cdot D(\varphi_{-t}^{\nabla\psi_{1}})\nonumber\\
		&=&\Big(D(\varphi\circ\varphi_{t}^{\nabla\psi_{1}})\circ 
			\varphi_{s}^{\nabla\alpha_{1}}\cdot 
			D(\varphi_{s}^{\nabla{\alpha_{1}}})\Big)\circ
			\varphi_{-t}^{\nabla\psi_{1}}\cdot D(\varphi_{-t}^{\nabla\psi_{1}})\nonumber\\
		&=&D(\varphi\circ\varphi_{t}^{\nabla\psi_{1}}\circ \varphi_{s}^{\nabla\alpha_{1}})\circ
			\varphi_{-t}^{\nabla\psi_{1}}\cdot D(\varphi_{-t}^{\nabla\psi_{1}})\nonumber\\
		&=&D(\varphi\circ\varphi_{t}^{\nabla\psi_{1}}\circ \varphi_{s}^{\nabla\alpha_{1}}\circ 
			\varphi_{-t}^{\nabla\psi_{1}})\,, 
	\end{eqnarray}
	and thus, 
	\begin{eqnarray}\label{equation faut payer le canada}
		&&\dfrac{d}{dt}\bigg\vert_{0}\dfrac{d}{ds}\bigg\vert_{0}\,\dfrac{1}{\textup{Vol}(M)}\cdot 
			D(\beta_{t,s}\circ\varphi_{-t}^{\nabla\psi_{1}})\nonumber\\
		&=&\dfrac{d}{dt}\bigg\vert_{0}\dfrac{d}{ds}\bigg\vert_{0}\,\dfrac{1}{\textup{Vol}(M)}\cdot 
			D(\varphi\circ\varphi_{t}^{\nabla\psi_{1}}\circ\varphi_{s}^{\nabla\alpha_{1}}\circ
			\varphi_{-t}^{\nabla\psi_{1}})\nonumber\\
		&=&\dfrac{d}{dt}\bigg\vert_{0}\dfrac{d}{ds}\bigg\vert_{0}\,\rho\circ 
			\varphi_{t}^{\nabla\psi_{1}}\circ\varphi_{s}^{\nabla\alpha_{1}}\circ
			\varphi_{-t}^{\nabla\psi_{1}}\cdot 
			D(\varphi_{t}^{\nabla\psi_{1}}\circ\varphi_{s}^{\nabla\alpha_{1}}\circ
			\varphi_{-t}^{\nabla\psi_{1}})\nonumber\\
		&=&\dfrac{d}{dt}\bigg\vert_{0}\dfrac{d}{ds}\bigg\vert_{0}\,\rho\circ 
			\varphi_{t}^{\nabla\psi_{1}}\circ\varphi_{s}^{\nabla\alpha_{1}}\circ
			\varphi_{-t}^{\nabla\psi_{1}}+\rho\cdot 
			\dfrac{d}{dt}\bigg\vert_{0}\dfrac{d}{ds}\bigg\vert_{0}\,
			D(\varphi_{t}^{\nabla\psi_{1}}\circ\varphi_{s}^{\nabla\alpha_{1}}\circ
			\varphi_{-t}^{\nabla\psi_{1}})\nonumber\\
		&=& g\big(\nabla\rho,[\nabla\alpha_{1},\nabla\psi_{1}]\big)+\rho\cdot 
			\dfrac{d}{dt}\bigg\vert_{0}\dfrac{d}{ds}\bigg\vert_{0}\,
			\big(\varphi_{t}^{\nabla\psi_{1}}\circ\varphi_{s}^{\nabla\alpha_{1}}\circ
			\varphi_{-t}^{\nabla\psi_{1}}\big)^{*} d\textup{vol}_{g}/d\textup{vol}_{g}\nonumber\\
		&=& g\big(\nabla\rho,[\nabla\alpha_{1},\nabla\psi_{1}]\big)+\rho\cdot 
			\dfrac{d}{dt}\bigg\vert_{0}
			\mathscr{L}_{(\varphi_{t}^{\nabla\psi_{1}})_{*_{\varphi_{-t}^{\nabla\psi_{1}}}}
			(\nabla\alpha_{1})_{\varphi_{-t}^{\nabla\psi_{1}}}}\big(d\textup{vol}_{g}\big)
			/d\textup{vol}_{g}\nonumber\\
		&=& g\big(\nabla\rho,[\nabla\alpha_{1},\nabla\psi_{1}]\big)+\rho\cdot 
			\dfrac{d}{dt}\bigg\vert_{0}
			\textup{div}\,\Big({(\varphi_{t}^{\nabla\psi_{1}})_{*_{\varphi_{-t}^{\nabla\psi_{1}}}}
			(\nabla\alpha_{1})_{\varphi_{-t}^{\nabla\psi_{1}}}}\Big)\nonumber\\
		&=& g\big(\nabla\rho,[\nabla\alpha_{1},\nabla\psi_{1}]\big)
			+\rho\cdot \textup{div}\,\big([\nabla\alpha_{1},\nabla\psi_{1}]\big)\nonumber\\
		&=&\textup{div}\,\big(\rho\cdot [\nabla\alpha_{1},\nabla\psi_{1}]\big)=
			\textup{div}\,\big(\rho\cdot \mathbb{P}_{\rho}\big(
			\rho\cdot [\nabla\alpha_{1},\nabla\psi_{1}]\big)\big)\,.
	\end{eqnarray}
	The lemma follows. 
\end{proof}
\begin{proposition}\label{proposition omega}
	The form $\Omega_{{\mathcal{L}}}:=-d\Theta_{{\mathcal{L}}}$ (see \eqref{equation forme explicite de theta} for the definition of 
	$\Theta_{{\mathcal{L}}}$), is a symplectic form on $T\mathcal{D}\,,$ and for $\rho\in \mathcal{D}$ and 
	$\nabla\phi\in \nabla C^{\infty}(M)\,,$
	\begin{eqnarray}\label{equation forme symplectic enfin!}
		&&(\Omega_{{\mathcal{L}}})_{(\rho,\nabla\phi)}\big((\rho,\nabla\phi,\nabla\psi_{1},\nabla\psi_{2}),
			(\rho,\nabla\phi,\nabla\alpha_{1},\nabla\alpha_{2})\big)\nonumber\\
		&=&\int_{M}\,g(\nabla\psi_{1},\nabla\alpha_{2})\,\rho\cdot d\textup{vol}_{g}-
			\int_{M}\,g(\nabla\alpha_{1},\nabla\psi_{2})\,\rho\cdot d\textup{vol}_{g}\,,
	\end{eqnarray}
	where $\nabla\psi_{1},\nabla\psi_{2},\nabla\alpha_{1},\nabla\alpha_{2}\in 
	\nabla C^{\infty}(M)\,.$
\end{proposition}
\begin{proof}
	The fact that $\Omega_{{\mathcal{L}}}$ is a symplectic form, i.e., that $\Omega_{{\mathcal{L}}}$ is non-degenerate (the 
	closedness being clear), is a simple consequence of 
	formula \eqref{equation forme symplectic enfin!} that we are now going to show. 
	
	Equation \eqref{equation differential 1 form}, together with Lemma \ref{lemma derivée le long de X} 
	and Lemma \ref{lemma le lie bracket!!!}, yield 
	\begin{eqnarray}\label{equation il faut du pain pour midi}
		&&(\Omega_{{\mathcal{L}}})_{(\rho,\nabla\phi)}\big((\rho,\nabla\phi,\nabla\psi_{1},\nabla\psi_{2}),
			(\rho,\nabla\phi,\nabla\alpha_{1},\nabla\alpha_{2})\big)\nonumber\\
		&=&-(d\Theta_{{\mathcal{L}}})_{(\rho,\nabla\phi)}\big(X_{(\rho,\nabla\phi)},Y_{(\rho,\nabla\phi)}\big)\nonumber\\
		&=& -X_{(\rho,\nabla\phi)}\big(\Theta_{{\mathcal{L}}}(Y)\big)+Y_{(\rho,\nabla\phi)}\big(\Theta_{{\mathcal{L}}}(X)\big)+
			(\Theta_{{\mathcal{L}}})_{(\rho,\nabla\phi)}([X,Y])\,,\nonumber\\
		&=&\int_{M}\,g(\nabla\psi_{1},\nabla\alpha_{2})\,\rho\cdot d\textup{vol}_{g}-
			\int_{M}\,g(\nabla\alpha_{1},\nabla\psi_{2})\,\rho\cdot d\textup{vol}_{g}\nonumber\\
		&&+\int_{M}\,g(\nabla\psi_{1},\nabla\phi)\,\textup{div}\,
			\big(\rho\cdot \nabla\alpha_{1}\big)\cdot d\textup{vol}_{g}
			-\int_{M}\,g(\nabla\alpha_{1},\nabla\phi)\,\textup{div}\,
			\big(\rho\cdot \nabla\psi_{1}\big)\cdot d\textup{vol}_{g}\nonumber\\
		&&-\int_{M}\,g\big(\nabla\phi,\mathbb{P}_{\rho}\big(\rho\cdot 
			[\nabla\psi_{1},\nabla\alpha_{1}]\big)\big)
			\,\rho\cdot d\textup{vol}_{g}\,.
	\end{eqnarray}
	Clearly, we have to show that the last two lines in \eqref{equation il faut du pain pour midi} 
	vanish. \\
	Using Remark \ref{j'en ai marre de ce papier!!}, one may rewrite the last term in 
	\eqref{equation il faut du pain pour midi} as 
	\begin{eqnarray}
		\int_{M}\,g\big(\nabla\phi,\mathbb{P}_{\rho}\big(\rho\cdot 
			[\nabla\psi_{1},\nabla\alpha_{1}]\big)\big)
			\,\rho\cdot d\textup{vol}_{g}
			=\int_{M}\,g\big(\nabla\phi,[\nabla\psi_{1},\nabla\alpha_{1}]\big)\rho \cdot 
			d\textup{vol}_{g}\,.
	\end{eqnarray}
	Using this last equation, one observes that the last three terms in 
	\eqref{equation il faut du pain pour midi} may be rewritten :
	\begin{eqnarray}
		&&\int_{M}\,g(\nabla\psi_{1},\nabla\phi)\,\textup{div}\,
			\big(\rho\cdot \nabla\alpha_{1}\big)\cdot d\textup{vol}_{g}-
			\int_{M}\,g(\nabla\alpha_{1},\nabla\phi)\,\textup{div}\,
			\big(\rho\cdot \nabla\psi_{1}\big)\cdot d\textup{vol}_{g}\nonumber\\
		&&-\int_{M}\,g\big(\nabla\phi,
			[\nabla\psi_{1},\nabla\alpha_{1}]\big)
			\,\rho\cdot d\textup{vol}_{g}\,.\nonumber\\
		&=&\int_{M}\,\bigg(-g\Big(\nabla\alpha_{1},\nabla g(\nabla\psi_{1},\nabla\phi)\Big)+
			g\Big(\nabla\psi_{1},\nabla g(\nabla\alpha_{1},\nabla\phi)\Big)
			-g(\nabla\phi,[\nabla\psi_{1},\nabla\alpha_{1}])\bigg)\,\rho\cdot d\textup{vol}_{g}\,\,\,\,\,\,
			\textbf{}\nonumber\\
		&=&\int_{M}\,\Big(-(\nabla\alpha_{1})\, d\phi(\nabla\psi_{1})+(\nabla\psi_{1}) \,
			d\phi(\nabla\alpha_{1})-d\phi([\nabla\psi_{1},\nabla\alpha_{1}])\Big)\,\rho\cdot 
			d\textup{vol}_{g}\nonumber\\
		&=&\int_{M}\,d(d\phi)(\nabla\psi_{1},\nabla\alpha_{1})\,\rho\cdot d\textup{vol}_{g}=0\,.
	\end{eqnarray}
	The proposition follows.
\end{proof}

	With such simple expression for the symplectic form $\Omega_{{\mathcal{L}}}$ (see 
	\eqref{equation forme symplectic enfin!}), it is possible the compute 
	explicitly the symplectic gradient of interesting functions, as well as their Poisson brackets. 
	Indeed, we define, 
	for $F\,:\,TM\rightarrow 
	\mathbb{R}\,,$ the following function on $T\mathcal{D}\,:$
	\begin{eqnarray}\label{equation definition de F chapeau}
		\widehat{F}(\rho,\nabla\phi):=\int_{M}\,F(\nabla\phi)\,\rho\cdot d\textup{vol}_{g}\,.
	\end{eqnarray}
	We also denote by ${\mathcal{H}}\,:\,T\mathcal{D}\rightarrow \mathbb{R}\,,$ 
	the Hamiltonian associated, via the Legendre transform, 
	to the Lagrangian ${\mathcal{L}}$
\footnote{Recall that if $L\,:\,TM\rightarrow\mathbb{R}$ is a Lagrangian defined on a manifold 
	$M\,,$ then its associated Hamiltonian $H\,:\,TM\rightarrow \mathbb{R}$ is the function defined, 
	for $u_{x}\in T_{x}M\,,$ by $H(u_{x}):=\mathbb{F}L(u_{x})(u_{x})-L(u_{x})\,,$
	 where $\mathbb{F}L\,:\,TM\rightarrow T^{*}M$ is the Legendre transform of $L\,.$}
	 :
	\begin{eqnarray}\label{equation definition du l'hamiltonien}
		{\mathcal{H}}(\rho,\nabla\phi):=\int_{M}\,\Big(\dfrac{1}{2}\,\|\nabla\phi\|^{2}+V\Big)\,
		\rho\cdot d\textup{vol}_{g}+\frac{\hslash^{2}}{2}\,
		\int_{M}\,\|\nabla\big(\sqrt{\rho}\big)\|^{2}\cdot 
		d\textup{vol}_{g}\,.
	\end{eqnarray}
	We shall denote by $X_{\widehat{F}}$ and $X_{{\mathcal{H}}}$ the symplectic gradients associated to 
	$\widehat{F}$ and ${\mathcal{H}}$ via the symplectic form $\Omega_{{\mathcal{L}}}$ (recall that these two vector 
	fields are defined on $T\mathcal{D}$ via the relations 
	$\Omega_{{\mathcal{L}}}(X_{\widehat{F}},\,.\,)=d\widehat{F}$ and $\Omega_{{\mathcal{L}}}(X_{{\mathcal{H}}},\,.\,)=d{\mathcal{H}})\,.$\\
	\textbf{}\\
	On $T\mathcal{D}\,,$ we shall use the Poisson bracket $\{\,.\,,\,.\,\}_{\mathcal{L}}$  
	associated to the symplectic 
	form $\Omega_{\mathcal{L}}$ (of course, this Poisson bracket is only defined 
	for functions having a symplectic gradient), and on $TM$ we shall use 
	the Poisson bracket, denoted $\{\,.\,,\,.\,\}_{L}\,,$ canonically associated to the Lagrangian 
	$L(u_{x}):=1/2\cdot g(u_{x},u_{x})-V(x)\,.$

\begin{proposition}\label{proposition symplectic gradients}
	For $F,G\,:\,TM\rightarrow \mathbb{R}\,,$
	$\rho\in \mathcal{D}$ and $\nabla\phi\in \nabla 
	C^{\infty}(M)\,,$ we have :
		\begin{enumerate}
			\item $(X_{{\mathcal{H}}} )_{(\rho,\nabla\phi)}=\bigg(\rho,\nabla\phi,\nabla\phi,
				\nabla\bigg[\dfrac{1}{2}\,\|\nabla\phi\|^{2}+V-\dfrac{\hslash^{2}}{2}\,
				\dfrac{\triangle\,(\sqrt{\rho})}{\sqrt{\rho}}\bigg]\bigg)\,,$
			\item $(X_{\widehat{F}})_{(\rho,\nabla\phi)}=\Big(\rho,\nabla\phi,\mathbb{P}_{\rho}
				\big(\rho\,(\pi_{*}^{TM}\circ X_{F}\circ \nabla\phi\big),
				\nabla\big(F(\nabla\phi)\big)\Big)\,,$
			\item $ \{\widehat{F},\widehat{G}\}_{\mathcal{L}}=-\widehat{\{F,G\}_{L}}\,.$
		\end{enumerate}
\end{proposition}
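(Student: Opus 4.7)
The plan is to verify each of the three items by a direct calculation, using the explicit formula \eqref{equation forme symplectic enfin!} for $\Omega_{\mathcal{L}}$, the defining relation $\Omega_{\mathcal{L}}(X_F,\cdot)=dF$, and the integration-by-parts identity of Remark \ref{j'en ai marre de ce papier!!}. Throughout, I test against an arbitrary tangent vector $(\rho,\nabla\phi,\nabla\alpha_1,\nabla\alpha_2)$ at $(\rho,\nabla\phi)\in T\mathcal{D}$, representing the $\rho$-direction $h=\textup{div}(\rho\,\nabla\alpha_1)$ and the $\phi$-direction $\nabla\alpha_2$.

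For (1), I differentiate ${\mathcal{H}}$ slot by slot. The $\phi$-variation gives $\int_M g(\nabla\phi,\nabla\alpha_2)\,\rho\,d\textup{vol}_g$, identifying the first gradient as $\nabla\phi$. The $\rho$-variation, after integration by parts using $\dot\rho=\textup{div}(\rho\,\nabla\alpha_1)$, produces $-\int g\bigl(\nabla[\tfrac12\|\nabla\phi\|^2+V],\nabla\alpha_1\bigr)\rho\,d\textup{vol}_g$ from the classical terms, together with a contribution from the quantum potential computed via the identity $\int\|\nabla\sqrt{\rho}\|^2\,d\textup{vol}_g=-\int\sqrt{\rho}\,\Delta\sqrt{\rho}\,d\textup{vol}_g$, whose variational derivative with respect to $\rho$ is $-\Delta\sqrt{\rho}/\sqrt{\rho}$. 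Matching to $\Omega_{\mathcal{L}}(X_{\mathcal{H}},\cdot)$ then reads off the second gradient. For (2), the $\rho$-variation of $\widehat{F}$ is $-\int g(\nabla(F\circ\nabla\phi),\nabla\alpha_1)\,\rho\,d\textup{vol}_g$, giving the fourth slot. The $\phi$-variation at $x$ equals $dF|_{\nabla\phi(x)}$ applied to the vertical lift of $\nabla\alpha_2(x)$; the key identification is that for the metric-induced symplectic structure on $TM$, the projection $\pi_{*}^{TM}X_F|_{u_x}$ is the $g$-gradient of $F$ along the fiber, so $dF|_u(v^V)=g(\pi_{*}X_F|_u,v)$. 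Inserting this and then applying Remark \ref{j'en ai marre de ce papier!!} to trade the vector field $\pi_{*}X_F\circ\nabla\phi$ for its projection $\mathbb{P}_\rho\bigl(\rho\cdot\pi_{*}X_F\circ\nabla\phi\bigr)\in\nabla C^\infty(M)$ (which is invisible against pairings with gradients) produces exactly the third slot.

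For (3), I substitute the formulas of (2) into $\Omega_{\mathcal{L}}(X_{\widehat{F}},X_{\widehat{G}})$, use Remark \ref{j'en ai marre de ce papier!!} to remove both projections $\mathbb{P}_\rho$, and apply $g(Y,\nabla h)=Y(h)$ to rewrite the integrand at $x\in M$ (with $u=\nabla\phi(x)$) as
\[
dG|_u\bigl(d(\nabla\phi)_x(\pi_{*}X_F|_u)\bigr)-dF|_u\bigl(d(\nabla\phi)_x(\pi_{*}X_G|_u)\bigr).
\]
Two standard ingredients on $TM$ then enter: the Levi-Civita horizontal/vertical splitting $ds_x(Y)=Y^H+(\nabla_Ys)^V$ for any section $s$ of $TM$ (applied with $s=\nabla\phi$), and the formula $\omega_L(A\oplus B,A'\oplus B')=g(A,B')-g(A',B)$ for the pullback symplectic form in this splitting, valid because the Levi-Civita connection is torsion-free (so $\omega_L$ vanishes on horizontal-horizontal pairs). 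The second identity identifies $\pi_{*}X_F$ with the vertical $g$-gradient of $F$ and expresses $\{F,G\}_L$ as a horizontal-vertical minus vertical-horizontal combination. Plugging everything in, the horizontal-vertical contributions of the integrand assemble into $-\{F,G\}_L(u)$, while the remaining vertical cross-terms are $g\bigl(\pi_{*}X_G,\nabla_{\pi_{*}X_F}\nabla\phi\bigr)-g\bigl(\pi_{*}X_F,\nabla_{\pi_{*}X_G}\nabla\phi\bigr)$, which equal $\textup{Hess}\,\phi(\pi_{*}X_F,\pi_{*}X_G)-\textup{Hess}\,\phi(\pi_{*}X_G,\pi_{*}X_F)=0$ by symmetry of the Hessian of a scalar function. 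This last cancellation is the main (and essentially only) obstacle of the proof, and is exactly the point at which the hypothesis that $\phi$ is a potential -- so $\nabla\phi$ is a \emph{gradient} -- is used in an essential way. Integrating over $M$ then yields $\{\widehat{F},\widehat{G}\}_{\mathcal{L}}=-\widehat{\{F,G\}_L}$.
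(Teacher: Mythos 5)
Your proposal is correct and follows essentially the same route as the paper: item by item, the paper also tests against the tangent vectors of \eqref{identification espaces densites}, uses the identity $\mathbb{F}F(u_{x})(v_{x})=g_{x}\big(\pi^{TM}_{*_{u_{x}}}(X_{F})_{u_{x}},v_{x}\big)$ together with Remark \ref{j'en ai marre de ce papier!!} to absorb the projections $\mathbb{P}_{\rho}$, and kills the residual cross-terms in (3) by the fact that $\nabla\phi$ is a gradient (the paper phrases this as $d(d\phi)=0$ via $g(X,\nabla_{Y}Z)-g(Y,\nabla_{X}Z)=-d(Z^{\sharp})(X,Y)$, which is equivalent to your symmetry-of-the-Hessian argument). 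The only cosmetic difference is your handling of the quantum potential in (1) via $\int_{M}\|\nabla\sqrt{\rho}\|^{2}\,d\textup{vol}_{g}=-\int_{M}\sqrt{\rho}\,\Delta\sqrt{\rho}\,d\textup{vol}_{g}$ and self-adjointness of $\Delta$, which is a little shorter than the paper's chain of integrations by parts ending in the identity $\tfrac{1}{4}\|\nabla u\|^{2}/u^{2}-\tfrac{1}{2}\Delta u/u=-\Delta(\sqrt{u})/\sqrt{u}$.
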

	We will show Proposition \ref{proposition symplectic gradients} with a series of Lemmas. 
\begin{lemma}\label{lemme legradient symp de l'hami}
	For $\rho\in \mathcal{D}$ and $\nabla\phi\in \nabla 
	C^{\infty}(M)\,,$ we have :
	\begin{eqnarray}\label{eqnation autre definition du lagrangian vector field}
		(X_{{\mathcal{H}}} )_{(\rho,\nabla\phi)}=\bigg(\rho,\nabla\phi,\nabla\phi,
				\nabla\bigg[\dfrac{1}{2}\,\|\nabla\phi\|^{2}+V-\dfrac{\hslash^{2}}{2}\,
				\dfrac{\triangle\,(\sqrt{\rho})}{\sqrt{\rho}}\bigg]\bigg)\,.
	\end{eqnarray}
\end{lemma}
\begin{proof}
	
	We will use the vector field $X\in \mathfrak{X}(T\mathcal{D})$ introduced in 
	\eqref{equation definition X Y}, and especially its flow $\varphi_{t}^{X}$ which is
	given in Lemma \ref{lemma le flowwww}.\\
	We have :
	\begin{eqnarray}
		&&(d\mathcal{H})_{(\rho,\nabla\phi)}X_{(\rho,\nabla\phi)}
			= \dfrac{d}{dt}\bigg\vert_{0}\,(\mathcal{H}\circ\varphi_{t}^{X})(\rho,\nabla\phi)\nonumber\\
		&=&\dfrac{d}{dt}\bigg\vert_{0}\,\mathcal{H}\bigg(\dfrac{1}{\textup{Vol}(M)}\cdot D(\varphi\circ 
			\varphi_{t}^{\nabla\psi_{1}}),\nabla\phi+t\nabla\psi_{2}\bigg)\nonumber\\
		&=&\dfrac{d}{dt}\bigg\vert_{0}\,\,\bigg[\int_{M}\bigg(\dfrac{1}{2}\,
			\|\nabla\phi+t\nabla\psi_{2}\|^{2}
			+V\bigg)\,\dfrac{1}{\textup{Vol}(M)}\,D(\varphi\circ\varphi_{t}^{\nabla\psi_{1}})
			\cdot d\textup{vol}_{g}\nonumber\\
		&&\,\,\,\,\,\,
			\,+\dfrac{\hslash^{2}}{2}\int_{M}\bigg\|\nabla\bigg(\sqrt{\dfrac{1}{\textup{Vol}(M)}D(\varphi\circ
			\varphi_{t}^{\nabla\psi_{1}})}\bigg)\bigg\|^{2}\cdot d\textup{vol}_{g}
			\,\,\bigg]\nonumber\\
		&=& \dfrac{d}{dt}\bigg\vert_{0}\,\int_{M}\,\Big(\dfrac{1}{2}\,\|\nabla\phi\|^{2}+
			t\,g(\nabla\phi,\nabla\psi_{2})+\dfrac{t^{2}}{2}\,\|\nabla\psi_{2}\|^{2}
			+V\Big)\,\rho_{t}\cdot d\textup{vol}_{g}\nonumber\\
		&&\,\,\,\,\,\,\,+\dfrac{d}{dt}\bigg\vert_{0}\,\dfrac{\hslash^{2}}{2}\,\int_{M}\,
			\|\nabla\big(\sqrt{\rho_{t}}\,\big)\|^{2}\cdot d\textup{vol}_{g}\,,
	\end{eqnarray}
	where $\rho_{t}:=1/\textup{Vol}(M)\cdot D(\varphi\circ\varphi_{t}^{\nabla\psi_{1}})\,.$\\
	But, according to \eqref{equation propriete de D},
	\begin{eqnarray}\label{equation derivee de rhooooo}
		\dfrac{\partial \rho_{t}}{\partial t}=\textup{div}\,(\rho_{t}\cdot \nabla\psi_{1})\,,
	\end{eqnarray}
	and thus, 
	\begin{eqnarray}\label{equation calcul des gradients symplectiques!}
		&&(d\mathcal{H})_{(\rho,\nabla\phi)}X_{(\rho,\nabla\phi)}\nonumber\\
		&=&\dfrac{1}{2}\int_{M}\,\|\nabla\phi\|^{2}\,\textup{div}\,(\rho\cdot\nabla\psi_{1})\cdot 
			d\textup{vol}_{g}+\int_{M}\,g(\nabla\phi,\nabla\psi_{2})\,\rho\cdot d\textup{vol}_{g}\nonumber\\
		&&+\int_{M}V\,\textup{div}\,(\rho\cdot\nabla\psi_{1})\cdot d\textup{vol}_{g}+
			\dfrac{d}{dt}\bigg\vert_{0}\,\dfrac{\hslash^{2}}{2}\int_{M}\,
			\|\nabla\big(\sqrt{\rho_{t}}\,\big)\|^{2}\cdot d\textup{vol}_{g}\,.
	\end{eqnarray}
	Let us compute the last term in \eqref{equation calcul des gradients symplectiques!} :
	\begin{eqnarray}\label{equation immagration bureau}
		&&\dfrac{d}{dt}\bigg\vert_{0}\,\int_{M}\,
			\|\nabla\big(\sqrt{\rho_{t}}\,\big)\|^{2}\cdot d\textup{vol}_{g}
			=2\int_{M}\,g\Big(\nabla\dfrac{\partial}{\partial t}\bigg\vert_{0}\sqrt{\rho_{t}},
			\nabla\big(\sqrt{\rho}\big)\Big)\cdot d\textup{vol}_{g}\nonumber\\
		&=&\int_{M}\,g\Big(\nabla\Big[\dfrac{1}{\sqrt{\rho}}\,\textup{div}\,(\rho\cdot \nabla\psi_{1})
			\Big],\nabla\big(\sqrt{\rho}\,\big)\Big)\cdot d\textup{vol}_{g}\nonumber\\
		&=&\int_{M}\,g\Big(\textup{div}\,(\rho\cdot\nabla\psi_{1})\,\big(-\dfrac{1}{\rho}\cdot 
			\dfrac{1}{2\sqrt{\rho}}\,\nabla\rho\big)+\dfrac{1}{\sqrt{\rho}}\,\nabla\,
			\textup{div}\,(\rho\cdot\nabla\psi_{1})\,,
			\nabla\big(\sqrt{\rho}\,\big)\Big)\cdot d\textup{vol}_{g}\nonumber\\
		&=&-\int_{M}\,g\big(\nabla\rho,\nabla(\sqrt{\rho}\,)\big)\cdot \textup{div}\,
			(\rho\cdot \nabla\psi_{1})\dfrac{1}{\rho}\cdot\dfrac{1}{2\sqrt{\rho}}
			\cdot d\textup{vol}_{g}+\int_{M}\,g\big(\nabla\textup{div}\,(\rho\cdot\nabla\psi_{1}),\nabla(\sqrt{\rho}\,)\big)\,
			\dfrac{1}{\sqrt{\rho}}\cdot d\textup{vol}_{g}\nonumber\\
		&=&-\int_{M}\,\|\nabla\rho\|^{2}\cdot \textup{div}\,
			(\rho\cdot \nabla\psi_{1})\,\dfrac{1}{4}\cdot\dfrac{1}{\rho^{2}}
			\cdot d\textup{vol}_{g}+\int_{M}\,g\big(\nabla\textup{div}\,(\rho\cdot\nabla\psi_{1}),\nabla\rho\big)\,
			\dfrac{1}{2\rho}\cdot d\textup{vol}_{g}\nonumber\\
		&=&-\int_{M}\,\|\nabla\rho\|^{2}\cdot \textup{div}\,
			(\rho\cdot \nabla\psi_{1})\,\dfrac{1}{4}\cdot\dfrac{1}{\rho^{2}}
			\cdot d\textup{vol}_{g}-\int_{M}\,\textup{div}\,(\rho\cdot\nabla\psi_{1})\cdot \mathscr{L}_{\nabla\rho}
			\Big(\dfrac{1}{2\rho}\cdot d\textup{vol}_{g}\Big)\nonumber\\
		&=&-\int_{M}\,\|\nabla\rho\|^{2}\cdot \textup{div}\,
			(\rho\cdot \nabla\psi_{1})\,\dfrac{1}{4}\cdot\dfrac{1}{\rho^{2}}
			\cdot d\textup{vol}_{g}
			-\int_{M}\,\textup{div}\,(\rho\cdot\nabla\psi)\,\,g\big(\nabla\rho,\nabla\big(\dfrac{1}{2\rho}\big)
			\big)\cdot d\textup{vol}_{g}\nonumber\\
		&&-\int_{M}\,\textup{div}\,(\rho\cdot\nabla\psi_{1})\,\dfrac{1}{2\rho}\cdot 
			\textup{div}\,(\nabla\rho)\cdot d\textup{vol}_{g}\nonumber\\
		&=&\int_{M}\,\bigg[\dfrac{1}{4}\,\dfrac{\|\nabla\rho\|^{2}}{\rho^{2}}
			-\dfrac{1}{2}\,\dfrac{\Delta \rho}{\rho}\bigg]
			\,\textup{div}\,(\rho\cdot \nabla\psi_{1})\cdot
			d\textup{vol}_{g}=-\int_{M}\,\dfrac{\Delta\big(\sqrt{\rho}\big)}{\sqrt{\rho}}\,\textup{div}\,
			(\rho\cdot\nabla\psi_{1})\cdot d\textup{vol}_{g}\,.
	\end{eqnarray}
	In the above computation, we have used the following formula, 	
	\begin{eqnarray}
		\dfrac{1}{4}\,\dfrac{\|\nabla u\|^{2}}{u^{2}}
			-\dfrac{1}{2}\,\dfrac{\Delta u}{u}=-\dfrac{\Delta\big(\sqrt{u}\big)}{\sqrt{u}}\,,
	\end{eqnarray}
	which is valid for every smooth function $u\,:\,M\rightarrow \mathbb{R}\,,$ as one may see after 
	a little computation. \\
	Now, \eqref{equation calcul des gradients symplectiques!},
	\eqref{equation immagration bureau} and Proposition \ref{proposition omega} yield 
	\begin{eqnarray}
		&&(d\mathcal{H})_{(\rho,\nabla\phi)}X_{(\rho,\nabla\phi)}=\nonumber\\
		&&\int_{M}\,\bigg[\dfrac{1}{2}\,\|\nabla\phi\|^{2}+V-\dfrac{\hslash^{2}}{2}\,
			\dfrac{\Delta\big(\sqrt{\rho}\big)}{\sqrt{\rho}}\bigg]\,\textup{div}\,
			(\rho\cdot \nabla\psi_{1})\cdot d\textup{vol}_{g}
			+\int_{M}\,g(\nabla\phi,\nabla\psi_{2})\,\rho\cdot d\textup{vol}_{g}\nonumber\\
		&=&-\int_{M}\,g\bigg(\nabla\psi_{1},\nabla
			\bigg[\dfrac{1}{2}\,\|\nabla\phi\|^{2}+V-\dfrac{\hslash^{2}}{2}\,
			\dfrac{\Delta\big(\sqrt{\rho}\big)}{\sqrt{\rho}}\bigg]\bigg)\,\rho\cdot d\textup{vol}_{g}
			+\int_{M}\,g(\nabla\phi,\nabla\psi_{2})\,\rho\cdot d\textup{vol}_{g}\nonumber\\
		&=&(\Omega_{\mathcal{L}})_{(\rho,\nabla\phi)}(X_{\mathcal{H}},X)\,.
	\end{eqnarray}
	The lemma follows. 
\end{proof}
\begin{remark}
	We observe (as it was intended to), that the flow generated by the symplectic gradient 
	$X_{\mathcal{H}}\in\mathfrak{X}(T\mathcal{D})$ corresponds exactly to the solutions of the Euler-Lagrange 
	equations on $\mathcal{D}$ associated to the Lagrangian $\mathcal{L}\,:\,T\mathcal{D}\rightarrow 
	\mathbb{R}$ introduced 
	in \eqref{definition du Lagrangien simplifie}, i.e., it satisfies the system of equations 
	\eqref{equation les equations d'euler-lagrange, enfin!!!} (with $X\cong 0$). \\
	We thus have a rigorous symplectic formulation of the Schr\"{o}dinger equation via its 
	hydrodynamical formulation which agrees with the corresponding Lagrangian formulation given in 
	Corollary \ref{corollaire du vraie lagrangien comme on aime!}. 
\end{remark}
\begin{lemma}\label{lemma gradients sympl. observables}
	For $\rho\in \mathcal{D}\,,$ $\nabla\phi\in \nabla C^{\infty}(M)$ and $F\,:\,TM\rightarrow
	\mathbb{R}\,,$ we have :
	\begin{eqnarray}\label{equation j'ecoute queen}
		(X_{\widehat{F}})_{(\rho,\nabla\phi)}=\Big(\rho,\nabla\phi,\mathbb{P}_{\rho}
		\big(\rho\,(\pi_{*}^{TM}\circ X_{F}\circ \nabla\phi\big),
		\nabla\big(F(\nabla\phi)\big)\Big)\,.	
	\end{eqnarray}
\end{lemma}
\begin{proof}
	As for the proof of Lemma \ref{lemme legradient symp de l'hami}, we will use 
	the vector field $X\in \mathfrak{X}(T\mathcal{D})$ introduced in 
	\eqref{equation definition X Y}, its flow $\varphi_{t}^{X}$ which is
	given in Lemma \ref{lemma le flowwww}, and the curve $\rho_{t}$ defined in the 
	proof of Lemma \ref{lemme legradient symp de l'hami} (see \eqref{equation derivee de rhooooo}). \\
	We have :
	\begin{eqnarray}\label{equation digestion, quand tu nous tiens!}
		&&(d\widehat{F})_{(\rho,\nabla\phi)}X_{(\rho,\nabla\phi)}=\dfrac{d}{dt}\bigg\vert_{0}\,
			\widehat{F}(\rho_{t},\nabla\phi+t\nabla\psi_{2})\nonumber\\
		&=&\dfrac{d}{dt}\bigg\vert_{0}\,\int_{M}\,
			F(\nabla\phi+t\nabla\psi_{2})\,\rho_{t}\cdot d\textup{vol}_{g}\nonumber\\
		&=&\int_{M}\Big[\mathbb{F}F(\nabla\phi)(\nabla\psi_{2})\,\rho+F(\nabla\phi)\,
			\textup{div}\,(\rho\cdot\nabla\phi)\Big]\cdot d\textup{vol}_{g}\nonumber\\
		&=&\int_{M}\Big[\mathbb{F}F(\nabla\phi)(\nabla\psi_{2})-
			g\big(\nabla\psi_{1},\nabla\big(F(\nabla\phi)\big)\big)\Big]\,
			\rho\cdot d\textup{vol}_{g}\,.
	\end{eqnarray}
	We need to transform the term $\mathbb{F}F(\nabla\phi)(\nabla\psi_{2})$ into a scalar product; to this 
	end, we will use the following formula 
	\begin{eqnarray}\label{equation formule sympatique!.!.}
		\mathbb{F}F(u_{x})(v_{x})=g_{{x}}\big(\pi^{TM}_{*_{u_{x}}}(X_{F})_{u_{x}},v_{x}\big)\,,
	\end{eqnarray}
	which holds whenever $u_{x},v_{x}\in T_{x}M\,,$ and where $X_{F}$ is the symplectic 
	gradient of $F$ with respect 
	to the symplectic form $\omega$ on $TM$ canonically associated to the metric $g\,.$ This 
	formula may be seen as follows. Recall that the canonical symplectic form 
	$\omega$ may be written (see \cite{Lang} and Example \ref{example blabla}) :
	\begin{eqnarray}\label{equation rapelle canonical sf digestion}
		\omega_{u_{x}}(A_{u_{x}},B_{u_{x}})=g_{x}(\pi^{TM}_{*_{u_{x}}}A_{u_{x}},KB_{u_{x}})-
		g_{x}(\pi^{TM}_{*_{u_{x}}}B_{u_{x}},KA_{u_{x}})\,,
	\end{eqnarray}
	where $u_{x}\in T_{x}M\,,$ $A_{u_{x}},B_{u_{x}}\in T_{u_{x}}TM$ and where $K\,:\,T(TM)\rightarrow TM$
	is the connector associated to the Riemannian metric $g\,.$ With 
	\eqref{equation rapelle canonical sf digestion}, it is a simple matter to derive 
	\eqref{equation formule sympatique!.!.} :
	\begin{eqnarray}
		\mathbb{F}F(u_{x})(v_{x})&=&\dfrac{d}{dt}\bigg\vert_{0}\,F(u_{x}+tv_{x})=(dF)_{u_{x}}\dfrac{d}{dt}
			\bigg\vert_{0}\,(u_{x}+tv_{x})=\omega_{u_{x}}\Big((X_{F})_{u_{x}},\dfrac{d}{dt}
			\bigg\vert_{0}\,(u_{x}+tv_{x})\Big)\nonumber\\
		&=&g_{x}\Big(\pi^{TM}_{*_{u_{x}}}(X_{F})_{u_{x}},K\dfrac{d}{dt}
			\bigg\vert_{0}\,(u_{x}+tv_{x})\Big)-g_{x}\Big(\pi^{TM}_{*_{u_{x}}}\dfrac{d}{dt}
			\bigg\vert_{0}\,(u_{x}+tv_{x}),K(X_{F})_{u_{x}}\Big)\nonumber\\
		&=&g_{x}\big(\pi^{TM}_{*_{u_{x}}}(X_{F})_{u_{x}},v_{x}\big)\,.
	\end{eqnarray}
	Of course, in the above computation we have used the following simple formulas:
	\begin{eqnarray}
		K\dfrac{d}{dt}\bigg\vert_{0}\,(u_{x}+tv_{x})=v_{x}\,\,\,\,\,\,\textup{and}\,\,\,\,\,\,
		\pi^{TM}_{*_{u_{x}}}\dfrac{d}{dt}\bigg\vert_{0}\,(u_{x}+tv_{x})=0\,.
	\end{eqnarray}
	Taking into account \eqref{equation formule sympatique!.!.}, we may rewrite 
	\eqref{equation digestion, quand tu nous tiens!} as
	\begin{eqnarray}
		&&\int_{M}\Big[\mathbb{F}F(\nabla\phi)(\nabla\psi_{2})-
			g\big(\nabla\psi_{1},\nabla\big(F(\nabla\phi)\big)\big)\Big]\,
			\rho\cdot d\textup{vol}_{g}\nonumber\\
		&=&\int_{M}\,\Big[g\big(\pi^{TM}_{*}\circ X_{F}\circ \nabla\phi,
			\nabla\psi_{2}\big)-
			g\big(\nabla\psi_{1},\nabla\big(F(\nabla\phi)\big)\big)\Big]\,
			\rho\cdot d\textup{vol}_{g}\nonumber\\
		&=&\int_{M}\Big[g\Big(\mathbb{P}_{\rho}\big(\rho\,(\pi^{TM}_{*}{\circ} X_{F}{\circ} \nabla\phi)\big),
			\nabla\psi_{2}\Big){-}
			g\big(\nabla\psi_{1},\nabla\big(F(\nabla\phi)\big)\big)\Big]
			\rho \cdot d\textup{vol}_{g}\,,\,\,\,\,\,\,\,\textbf{}
	\end{eqnarray}
	from which we see that $(d\widehat{F})X=
	\Omega_{\mathcal{L}}(X_{\widehat{F}},X)\,,$ with $X_{\widehat{F}}$ such as defined 
	in the right hand side of \eqref{equation j'ecoute queen}. The vector field $X_{\widehat{F}}$
	is thus the symplectic gradient of $F$ with respect to the symplectic form $\Omega_{\mathcal{L}}\,.$ 
	The lemma follows. 
\end{proof}
\begin{lemma}\label{lemme le crochet de poissonnnn}
	For $F,G\,:\,TM\rightarrow \mathbb{R}\,,$ we have :
	\begin{eqnarray}
		\{\widehat{F},\widehat{G}\}_{\mathcal{L}}=-\widehat{\{F,G\}_{L}}\,.
	\end{eqnarray}
\end{lemma}
\begin{proof}
	For $\rho\in\mathcal{D}\,,$ $\nabla\phi\in \nabla C^{\infty}(M)\,,$ and, in 
	view of Lemma \ref{lemma gradients sympl. observables}, we have :
	\begin{eqnarray}\label{equation pouette!! et repoutte}
		&&\{\widehat{F},\widehat{G}\}_{\mathcal{L}}(\rho,\nabla\phi)=
			(\Omega_{\mathcal{L}})_{(\rho,\nabla\phi)}(X_{\widehat{F}},
			X_{\widehat{G}})\nonumber\\
		&=&\int_{M}\,g\Big(\mathbb{P}_{\rho}\big(\rho\,(\pi^{TM}_{*}\circ X_{F}\circ \nabla\phi)
			\big),\nabla\big(G(\nabla\phi)\big)\Big)\,\rho\cdot d\textup{vol}_{g}\nonumber\\
		&&-\int_{M}\,g\Big(\mathbb{P}_{\rho}\big(\rho\,(\pi^{TM}_{*}\circ X_{G}\circ \nabla\phi)
			\big),\nabla\big(F(\nabla\phi)\big)\Big)\,\rho\cdot d\textup{vol}_{g}\nonumber\\
		&=&\int_{M}\,g\Big(\pi^{TM}_{*}\circ X_{F}\circ \nabla\phi,
			\nabla\big(G(\nabla\phi)\big)\Big)\,\rho\cdot d\textup{vol}_{g}\nonumber\\
		&&-\int_{M}\,g\Big(\pi^{TM}_{*}\circ X_{G}\circ \nabla\phi,
			\nabla\big(F(\nabla\phi)\big)\Big)\,\rho\cdot d\textup{vol}_{g}\,.
	\end{eqnarray}
	Moreover, we observe that if $X$ is a vector field on $M\,,$ then 
	\begin{eqnarray}
		&&g\big(\nabla\big(G(\nabla\phi)\big),X\big)=G_{*}(\nabla\phi)_{*}X
			=\omega\big(X_{G}\circ \nabla\phi,(\nabla\phi)_{*}X\big)\nonumber\\
		&=&g\big(\pi^{TM}_{*}\circ X_{G}\circ \nabla\phi,K(\nabla\phi)_{*}X\big)-
			g\big(\pi^{TM}_{*}\circ (\nabla\phi)_{*}X,KX_{G}\circ \nabla\phi\big)\nonumber\\
		&=&g\big(\pi^{TM}_{*}\circ X_{G}\circ \nabla\phi,\nabla_{X}\nabla\phi\big)
			-g\big(X,K X_{G}\circ \nabla\phi\big)\,,
	\end{eqnarray}
	and thus, denoting $\widetilde{X}_{F}:=\pi^{TM}_{*}\circ X_{G}\circ \nabla\phi$ and 
	$\widetilde{X}_{G}:=\pi^{TM}_{*}\circ X_{F}\circ \nabla\phi$ for simplicity, we may rewrite 
	\eqref{equation pouette!! et repoutte} as :
	\begin{eqnarray}\label{equation je commence a etre fatiqueeeeeee}
		&&\{\widehat{F},\widehat{G}\}_{\mathcal{L}}(\rho,\nabla\phi)=\nonumber\\
		&&\int_{M}\,g\big(\widetilde{X}_{G},\nabla_{\widetilde{X}_{F}}\nabla\phi\big)\,
			\rho\cdot d\textup{vol}_{g}
			-\int_{M}\,g\big(\widetilde{X}_{F},K X_{G}\circ\nabla\phi\big)\,\rho\cdot 
			d\textup{vol}_{g}\nonumber\\
		&&-\int_{M}\,g\big(\widetilde{X}_{F},\nabla_{\widetilde{X}_{G}}\nabla\phi\big)\,
			\rho\cdot d\textup{vol}_{g}
			-\int_{M}\,g\big(\widetilde{X}_{G},K X_{F}\circ\nabla\phi\big)\,\rho\cdot 
			d\textup{vol}_{g}\nonumber\\
		&=&-\bigg[\int_{M}\,g(\widetilde{X}_{F},K X_{G}\circ\nabla\phi)\,\rho\cdot d\textup{vol}_{g}
			-\int_{M}\,g(\widetilde{X}_{G},K X_{G}\circ\nabla\phi)\,\rho\cdot d\textup{vol}_{g}\bigg]
			\nonumber\\
		&&+\int_{M}\,g\big(\widetilde{X}_{G},\nabla_{\widetilde{X}_{F}}\nabla\phi\big)\,
			\rho\cdot d\textup{vol}_{g}-\int_{M}\,g\big(\widetilde{X}_{F},
			\nabla_{\widetilde{X}_{G}}\nabla\phi\big)\,
			\rho\cdot d\textup{vol}_{g}\nonumber\\
		&=&-\widehat{\{ F,G \}_{L}}(\rho,\nabla\phi)
			+\int_{M}\,g\big(\widetilde{X}_{G},\nabla_{\widetilde{X}_{F}}\nabla\phi\big)\,
			\rho\cdot d\textup{vol}_{g}-\int_{M}\,g\big(\widetilde{X}_{F},
			\nabla_{\widetilde{X}_{G}}\nabla\phi\big)\,
			\rho\cdot d\textup{vol}_{g}\,.
	\end{eqnarray}
	Clearly, we have to show that the last line in \eqref{equation je commence a etre fatiqueeeeeee} 
	vanishes. But this can be done easily with the help of the following formula 
	\begin{eqnarray}\label{equation I'm calling you!!!}
		g\big(X,\nabla_{Y}Z\big)-g\big(Y,\nabla_{X}Z\big)=-d(Z^{\sharp})(X,Y)\,,
	\end{eqnarray}
	which holds for every vector fields $X,Y,Z\in \mathfrak{X}(M)\,,$ and where $Z^{\sharp}$
	is the 1-form on $M$ defined by $(Z^{\sharp})_{x}(u_{x}):=g_{x}(Z_{x},u_{x})\,,$ $u_{x}\in T_{x}M\,.$ \\
	Using \eqref{equation I'm calling you!!!} and the fact that $d(d\phi)=0\,,$ one easily sees that 
	the last line in \eqref{equation je commence a etre fatiqueeeeeee} vanishes. The lemma follows. 
\end{proof}

\section{The almost Hermitian structure of $T\mathcal{D}$}\label{sss avant dernier}

	In \S\ref{section Euler-Lagrange etc.} 
	and \S\ref{section Hamiltonian}, we used 
	the usual techniques of geometric mechanics to find a Lagrangian and Hamiltonian description
	of the Schr\"{o}dinger equation, and we eventually arrived at the symplectic form $\Omega_{\mathcal{L}}$ 
	on $T\mathcal{D}$ which encodes the dynamics of a quantum particle and whose explicit description is given 
	in Proposition \ref{proposition omega}. \\
	In this section, we follow some ideas of \cite{Molitor-exponential} and show 
	that $\Omega_{\mathcal{L}}$ is the fundamental $2$-form of an almost Hermitian structure 
	on $T\mathcal{D}$ which comes from Dombrowski's construction \cite{Dombrowski} applied to a metric 
	$g^{\mathcal{D}}$ and a (non-metric) connection 
	$\nabla^{\mathcal{D}}$ on $\mathcal{D}\,,$ and discuss the integrability of this almost Hermitian structure.\\ 

	Let us start by recalling Dombrowki's construction. If $M$ is a manifold endowed with an affine 
	connection $\nabla\,,$ then Dombrowski splitting Theorem holds (see \cite{Dombrowski,Lang}) :
	\begin{eqnarray}
		T(TM)\cong TM\oplus TM\oplus TM\,,
	\end{eqnarray}
	this splitting being viewed as an isomorphism of vector bundles over $M\,,$ and the 
	isomorphism, say $\Phi\,,$
	being
	\begin{eqnarray}\label{equation Dombrowski}
	T_{u_{x}}TM\ni A_{u_{x}}\overset{\Phi}{\longmapsto} 
	\big(u_{x},(\pi^{M})_{*_{u_{x}}}A_{u_{x}},K^{M} A_{u_{x}}\big)\,,
	\end{eqnarray}
	where $\pi^{M}\,:\,TM\rightarrow M$ is the canonical projection and where 
	$K^{M}\,:\,T(TM)\rightarrow TM$ is the canonical connector associated to the connection 
	$\nabla^{}$ (see \cite{Lang}). 

	Having $A_{u_{x}}=\Phi^{-1}\big((u_{x},v_{x},w_{x})\big)\in T_{u_{x}}TM\,,$ we shall write, for 
	simplicity, $A_{u_{x}}=(u_{x},v_{x},w_{x})$ instead of $\Phi^{-1}\big((u_{x},v_{x},w_{x})\big)\,,$
	i.e., we will drop $\Phi\,.$ The second component 
	$v_{x}$ is usually referred to as the horizontal component of $A_{u_{x}}$ (with respect 
	to the connection $\nabla^{}$) and $w_{x}$ the vertical component.  \\

	With the above notation, and provided that $M$ is endowed with a Riemannian metric $g\,,$ 
	it is a simple matter to define on 
	$TM$ an almost Hermitian structure. 
	Indeed, we define a metric $g^{TM}\,,$ a 2-form $\omega^{TM}$
	and an almost complex structure $J^{TM}$ by setting
	\begin{eqnarray}\label{equation definition G, omega, etc.}
		g^{TM}_{u_{x}}\big(\big(u_{x},v_{x},w_{x}\big),
			\big({u}_{x},\overline{v}_{x},
			\overline{w}_{x}\big)\big)&:=&
			g^{}_{x}\big(v_{x},\overline{v}_{x}\big)+
			g^{}_{x}\big(w_{x},\overline{w}_{x}\big)\,,\nonumber\\
		\omega^{TM}_{u_{x}}\big(\big(u_{x},v_{x},w_{x}\big),
			\big({u}_{x},\overline{v}_{x},
			\overline{w}_{x}\big)\big)&:=&g^{}_{x}\big(v_{x},\overline{w}_{x}\big)-
			g^{}_{x}\big(w_{x},\overline{v}_{x}\big)\,,\nonumber\\
		J^{TM}_{u_{x}}\big(\big(u_{x},v_{x},w_{x}\big)\big)&:=&
		\big(u_{x},-w_{x},v_{x}\big)\,,
	\end{eqnarray}
	where $u_{x},v_{x},w_{x},\overline{v}_{x},\overline{w}_{x}
	\in T_{x}M\,.$\\
	Clearly, $(J^{TM})^{2}=-\textup{Id}$  and $g^{TM}(J^{TM}\,.\,,J^{TM}\,.\,)=g^{TM}(\,.\,,\,.\,)\,,$ which means that 
	$(TM,g^{TM},J^{TM})$ is an almost Hermitian manifold, and one readily sees that 
	$g^{TM},J^{TM}$ and $\omega^{TM}$ are compatible, i.e., that $
	\omega^{TM}=g^{TM}\big(J^{TM}\,.\,,\,.\,\big)\,;$ the $2$-form $\omega^{TM}$ is thus the fundamental 2-form of 
	the almost Hermitian manifold $(TM,g^{TM},J^{TM})\,.$ This is Dombrowski's construction.\\
\begin{example}\label{example blabla}
	Let $(M,g)$ be a (finite dimensional) Riemannian manifold with Levi-Civita connection $\nabla^{}\,,$ and let 
	$\omega=-d\theta$ be the canonical symplectic form\footnote{Recall that the canonical $1$-form $\theta$ on $T^{*}M$ 
	is defined, for $\alpha_{x}\in T_{x}^{*}M$
	and $A_{\alpha_{x}}\in T_{\alpha_{x}}T^{*}M\,,$ by 
	$\theta_{\alpha_{x}}(A_{\alpha_{x}}):=\alpha_{x}((\pi^{T^{*}M})_{*_{\alpha_{x}}}A_{\alpha_{x}})\,,$ where 
	$\pi^{T^{*}M}\,:\,T^{*}M\rightarrow M$ is the canonical projection.} 
	on $T^{*}M\,.$ Then the $2$-form $\omega^{TM}$ on $TM$ associated to $(g,\nabla^{})$ via Dombrowski's construction is 
	equal to the pull back of the canonical symplectic form $\omega$ via the Legendre transform $TM\rightarrow T^{*}M\,,
	v_{x}\mapsto g_{x}(v_{x},\,.\,)$ (see \cite{Lang}).
\end{example}
	In the case of the infinite dimensional manifold $\mathcal{D}\,,$ we already defined in \eqref{equation definition la une forme} 
	a metric $g^{\mathcal{D}}$ on $\mathcal{D}\,:$ 
	\begin{eqnarray}
	(g^{\mathcal{D}})_{\rho}\big((\rho,\nabla\phi),(\rho,\nabla\phi')\big):=\int_{M}\,
		g(\nabla \phi,\nabla\phi')\,\rho\,\cdot d\textup{vol}_{g}\,,
	\end{eqnarray}
	where $\rho\in \mathcal{D}$ and where $\nabla\phi,\nabla\phi'\in \nabla C^{\infty}(M)\,.$ We also used the following 
	identification (see \eqref{identification espaces densites}) :
	\begin{eqnarray}
		T(T\mathcal{D})\cong \mathcal{D}\times \nabla C^{\infty}(M)
			\times\nabla C^{\infty}(M)\times\nabla C^{\infty}(M)\,.
	\end{eqnarray}
	Clearly, this identification defines an affine connection $\nabla^{\mathcal{D}}$ on $\mathcal{D}$ whose 
	associated connector $K^{\mathcal{D}}$ is 
	\begin{eqnarray}
			K^{\mathcal{D}}\,:\,T(T\mathcal{D})\rightarrow T\mathcal{D}\,,\,\,\,
			(\rho,\nabla\phi,\nabla\psi_{1},\nabla\psi_{2})\mapsto (\rho,\nabla\psi_{2})
	\end{eqnarray}
	(one easily verifies that the above map has the properties of a connector).

	We thus have a triple $(\mathcal{D},g^{\mathcal{D}},\nabla^{\mathcal{D}})$ which yields, via Dombrowski's construction, 
	an almost Hermitian structure $(g^{T\mathcal{D}},J^{T\mathcal{D}},\omega^{T\mathcal{D}})$ on $T\mathcal{D}\,.$ For example, 
	\begin{eqnarray}
		&&(g^{T\mathcal{D}})_{(\rho,\nabla\phi)}\big((\rho,\nabla\phi,\nabla\psi_{1},\nabla\psi_{2}),
			(\rho,\nabla\phi,\nabla\alpha_{1},\nabla\alpha_{2})\big)\nonumber\\
		&=&\int_{M}\,g(\nabla\psi_{1},\nabla\alpha_{1})\,\rho\cdot d\textup{vol}_{g}+
			\int_{M}\,g(\nabla\psi_{2},\nabla\alpha_{2})\,\rho\cdot d\textup{vol}_{g}\,. 
	\end{eqnarray} 
	In particular, Proposition \ref{proposition omega} immediately yields  
\begin{proposition}
	The fundamental $2$-form $\omega^{T\mathcal{D}}$ of the almost Hermitian structure of $T\mathcal{D}$ associated to 
	$(g^{\mathcal{D}},\nabla^{\mathcal{D}})$ via Dombrowski's construction is $\Omega_{\mathcal{L}}\,,$ i.e. 
	\begin{eqnarray}
		\omega^{T\mathcal{D}}=\Omega_{\mathcal{L}}\,,
	\end{eqnarray}
	where $\Omega_{\mathcal{L}}=-d\Theta_{\mathcal{L}}$ has been defined in \eqref{equation je me suis gourré!!}\,.
\end{proposition}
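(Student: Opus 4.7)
The plan is to observe that this is essentially a direct comparison of formulas, once we correctly identify the horizontal and vertical components of tangent vectors to $T\mathcal{D}$ under the chosen connection $\nabla^{\mathcal{D}}$.

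First I would verify that the map $K^{\mathcal{D}}(\rho,\nabla\phi,\nabla\psi_1,\nabla\psi_2) := (\rho,\nabla\psi_2)$ really is a connector in the sense of Dombrowski/Lang, i.e.\ that it is fiber-linear on each tangent space to $T\mathcal{D}$, that it commutes appropriately with the tangent of the bundle projection $\pi^{T\mathcal{D}}$, and that its kernel complements the vertical subbundle in the expected way. This is immediate from the explicit form of the identification \eqref{identification espaces densites}: the third slot $\nabla\psi_1$ is precisely $(\pi^{T\mathcal{D}})_{*}(\rho,\nabla\phi,\nabla\psi_1,\nabla\psi_2)$ (the horizontal component) and the fourth slot $\nabla\psi_2$ is the vertical component, so $K^{\mathcal{D}}$ picks out the vertical part just as in Dombrowski's setup.

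Next I would plug these horizontal/vertical components into the formula for $\omega^{TM}$ in \eqref{equation definition G, omega, etc.}, but now taken relative to the pair $(g^{\mathcal{D}},\nabla^{\mathcal{D}})$ on the infinite-dimensional base $\mathcal{D}$. By definition this gives
\begin{eqnarray}
\omega^{T\mathcal{D}}_{(\rho,\nabla\phi)}\big((\rho,\nabla\phi,\nabla\psi_1,\nabla\psi_2),(\rho,\nabla\phi,\nabla\alpha_1,\nabla\alpha_2)\big)
&=& (g^{\mathcal{D}})_\rho(\nabla\psi_1,\nabla\alpha_2)-(g^{\mathcal{D}})_\rho(\nabla\alpha_1,\nabla\psi_2) \nonumber\\
&=& \int_M g(\nabla\psi_1,\nabla\alpha_2)\,\rho\cdot d\textup{vol}_g - \int_M g(\nabla\alpha_1,\nabla\psi_2)\,\rho\cdot d\textup{vol}_g.\nonumber
\end{eqnarray}
This is exactly the expression for $\Omega_{\mathcal{L}}$ established in Proposition~\ref{proposition omega}, so the two forms agree pointwise on all pairs of tangent vectors, hence as $2$-forms on $T\mathcal{D}$.

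The only potentially delicate point, and the one I would flag explicitly, is the justification that Dombrowski's splitting and the attendant formulas really do go through verbatim in the Fr\'echet category: we need $K^{\mathcal{D}}$ to be a smooth connector and the splitting $T(T\mathcal{D})\cong T\mathcal{D}\oplus_{\mathcal{D}} T\mathcal{D}\oplus_{\mathcal{D}} T\mathcal{D}$ to be an isomorphism of (tame) vector bundles. Both follow from the way the identification \eqref{identification espaces densites} was built in \S\ref{subsection tangent}, using the smooth tame family of inverses from Lemma~\ref{lemme operateur inversible}, so no new analysis is required. With that remark, the proposition is nothing more than a direct comparison of the two explicit formulas, and no further computation is needed.
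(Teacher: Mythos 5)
Your proposal is correct and follows exactly the paper's route: the paper simply observes that, with the connector $K^{\mathcal{D}}(\rho,\nabla\phi,\nabla\psi_{1},\nabla\psi_{2})=(\rho,\nabla\psi_{2})$ coming from the identification \eqref{identification espaces densites}, Dombrowski's formula for the fundamental $2$-form reproduces verbatim the expression for $\Omega_{\mathcal{L}}$ obtained in Proposition \ref{proposition omega}, so the result is "immediate." Your additional remarks on checking the connector axioms and the tame Fr\'echet setting are sensible but add nothing beyond what \S\ref{subsection tangent} already provides.
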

\begin{remark}
	As we saw in \S\ref{section Hamiltonian}, the flow generated by the Hamiltonian vector field 
	$X_{\mathcal{H}}\in \mathfrak{X}(T\mathcal{D})$ 
	with respect to the symplectic form $\Omega_{\mathcal{L}}$ gives the dynamics of a quantum particle 
	under the influence of a potential $V$ (see \eqref{equation definition du l'hamiltonien} 
	for the definition of $\mathcal{H}\,:\,T\mathcal{D}\rightarrow \mathbb{R}$).
	Hence, and since $\Omega_{\mathcal{L}}=\Omega^{T\mathcal{D}}\,,$ we deduce that 
	the dynamics of a quantum particle is encoded in $(\mathcal{D},g^{\mathcal{D}},\nabla^{\mathcal{D}})\,.$
	This is analogous to the fact that the dynamics of a finite dimensional quantum system is encoded in the triple 
	$(\mathcal{P}_{n}^{\times},h_{F},\nabla^{(e)})\,,$ where $h_{F}$ and $\nabla^{(e)}$ are respectively the 
	Fisher metric and the exponential connection on $\mathcal{P}_{n}^{\times}$ 
	(see \cite{Molitor-exponential,Molitor-quantique}). In this sense, $g^{\mathcal{D}}$ and $\nabla^{\mathcal{D}}$ are 
	infinite dimensional analogues of $h_{F}$ and $\nabla^{(e)}\,.$
\end{remark}
	Let $T^{\mathcal{D}}$ and $R^{\mathcal{D}}$ be the torsion and the curvature tensor associated to the connection 
	$\nabla^{\mathcal{D}}\,,$ i.e., 
	\begin{description}
		\item[$\bullet$] $T^{\mathcal{D}}(X,Y)=
			\nabla^{\mathcal{D}}_{X}Y-\nabla^{\mathcal{D}}_{Y}X-[X,Y]\,,$ 
		\item[$\bullet$] $R^{\mathcal{D}}(X,Y)(Z)=\nabla^{\mathcal{D}}_{X}
			\nabla^{\mathcal{D}}_{Y}Z-\nabla^{\mathcal{D}}_{Y}
			\nabla^{\mathcal{D}}_{X}Z-\nabla^{\mathcal{D}}_{[X,Y]}Z\,,$ 
	\end{description}
	where $X,Y,Z\in \mathfrak{X}(\mathcal{D})\,.$ 

	By inspection of the proof of Lemma \ref{lemma le lie bracket!!!}, one easily finds that 
\begin{lemma}
	We have:	
	\begin{enumerate}
		\item $T^{\mathcal{D}}\big((\rho,\nabla\phi),(\rho,\nabla\psi)\big)=
			\Big(\rho,\mathbb{P}_{\rho}\big(\rho\,[\nabla\phi,\nabla\psi]
			\big)\Big)\,,$
		\item $R^{\mathcal{D}}\equiv 0\,,$
	\end{enumerate}
	where $\rho\in \mathcal{D}$ and $\nabla\phi,\nabla\psi\in \nabla C^{\infty}(M)\,,$ and where the operator $\mathbb{P}_{\rho}$ 
	has been defined in \eqref{equation HH decomposition}. 
	In particular, $\nabla^{\mathcal{D}}$ is not the Levi-Civita connection associated to $g^{\mathcal{D}}$ (its torsion is not trivial).
\end{lemma}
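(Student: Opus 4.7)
The plan is to exploit the fact that $\nabla^{\mathcal{D}}$ is the flat connection induced by the global trivialization $T\mathcal{D}\cong \mathcal{D}\times\nabla C^{\infty}(M)$. Identifying a vector field $X\in\mathfrak{X}(\mathcal{D})$ with a smooth map $X^{(h)}\colon \mathcal{D}\rightarrow \nabla C^{\infty}(M)$, the formula $K^{\mathcal{D}}(\rho,\nabla\phi,\nabla\psi_{1},\nabla\psi_{2})=(\rho,\nabla\psi_{2})$ for the connector gives immediately
\begin{eqnarray*}
\nabla^{\mathcal{D}}_{Y}X \;=\; \big(\rho,(dX^{(h)})_{\rho}(Y_{\rho})\big),
\end{eqnarray*}
so $\nabla^{\mathcal{D}}_{Y}X$ vanishes whenever $X^{(h)}$ is independent of $\rho$.

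For the torsion, I would evaluate $T^{\mathcal{D}}$ on two tangent vectors $(\rho,\nabla\phi)$ and $(\rho,\nabla\psi)$ by extending them to the constant vector fields $X_{\rho'}:=(\rho',\nabla\phi)$ and $Y_{\rho'}:=(\rho',\nabla\psi)$. The observation above gives $\nabla^{\mathcal{D}}_{X}Y=\nabla^{\mathcal{D}}_{Y}X=0$, so that $T^{\mathcal{D}}(X,Y)=-[X,Y]$. The remaining Lie bracket is computed by precisely the argument in the proof of Lemma \ref{lemma le lie bracket!!!}, applied to the horizontal lifts $\widetilde{X},\widetilde{Y}\in\mathfrak{X}(T\mathcal{D})$ obtained by setting $\nabla\psi_{2}=\nabla\alpha_{2}=0$ in the notation there; the lemma yields $[\widetilde{X},\widetilde{Y}]_{(\rho,\nabla\phi_{0})}=(\rho,\nabla\phi_{0},\mathbb{P}_{\rho}(\rho[\nabla\psi,\nabla\phi]),0)$, and projecting via $\pi^{T\mathcal{D}}$ gives $[X,Y]_{\rho}=(\rho,\mathbb{P}_{\rho}(\rho[\nabla\psi,\nabla\phi]))$. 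Substituting and using the antisymmetry $[\nabla\psi,\nabla\phi]=-[\nabla\phi,\nabla\psi]$ yields the claimed formula for $T^{\mathcal{D}}$.

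For the flatness $R^{\mathcal{D}}\equiv 0$, I would again appeal to tensoriality and evaluate on constant extensions $X,Y,Z$ of three arbitrary tangent vectors. Then $\nabla^{\mathcal{D}}_{X}Z=\nabla^{\mathcal{D}}_{Y}Z=0$ makes the iterated derivatives $\nabla^{\mathcal{D}}_{X}\nabla^{\mathcal{D}}_{Y}Z$ and $\nabla^{\mathcal{D}}_{Y}\nabla^{\mathcal{D}}_{X}Z$ vanish; and although the bracket $[X,Y]$ is no longer a constant vector field (by the previous paragraph), $\nabla^{\mathcal{D}}_{[X,Y]}Z=(\rho,(dZ^{(h)})[X,Y])$ still vanishes because $Z^{(h)}$ is constant. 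Hence $R^{\mathcal{D}}(X,Y)Z=0$.

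The only real care required is sign-bookkeeping in the Lie bracket: as derived from the explicit flow in Lemma \ref{lemma le lie bracket!!!}, $[\widetilde{X},\widetilde{Y}]$ comes out in the reversed order $[Y^{(h)},X^{(h)}]$, and this sign combines with the minus sign in $T^{\mathcal{D}}=-[X,Y]$ to produce exactly $[\nabla\phi,\nabla\psi]$ in the statement. Beyond this, there is no genuine obstacle: once the triviality of $\nabla^{\mathcal{D}}$ with respect to the identification $T\mathcal{D}\cong \mathcal{D}\times\nabla C^{\infty}(M)$ is noted, both assertions reduce mechanically to Lemma \ref{lemma le lie bracket!!!}.
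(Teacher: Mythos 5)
Your argument is correct and is exactly the route the paper intends: the paper gives no written proof beyond the remark that the lemma follows ``by inspection of the proof of Lemma \ref{lemma le lie bracket!!!},'' and your reduction — constant sections of the trivialization $T\mathcal{D}\cong\mathcal{D}\times\nabla C^{\infty}(M)$ have vanishing $\nabla^{\mathcal{D}}$-derivative, so by tensoriality $T^{\mathcal{D}}=-[X,Y]$ and $R^{\mathcal{D}}=0$, with the bracket supplied by that lemma — is precisely that inspection made explicit, including the correct sign bookkeeping.
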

	Let $N^{T\mathcal{D}}$ be the Nijenhuis tensor of $J^{T\mathcal{D}}\,,$ i.e.,
	\begin{eqnarray}
		N^{T\mathcal{D}}(X,Y):=[X,Y]-[J^{TM}X,J^{TM}Y]+J^{T\mathcal{D}}[J^{T\mathcal{D}}X,Y]+
		J^{T\mathcal{D}}[X,J^{T\mathcal{D}}Y]\,,
	\end{eqnarray} 
	where $X,Y\in \mathfrak{X}(T\mathcal{D})\,.$

	Again, by inspection of the proof of Lemma \ref{lemma le lie bracket!!!}, one easily finds that
\begin{proposition}
	Let $J^{T\mathcal{D}}$ be the almost complex structure on $T\mathcal{D}$ associated to $(g^{\mathcal{D}},\nabla^{\mathcal{D}})$
	via Dombrowski's construction, and let $N^{T\mathcal{D}}$ be its Nijenhuis tensor. Then, 
	\begin{eqnarray}
		&&N^{T\mathcal{D}}\big((\rho,\nabla\phi,\nabla\psi_{1},\nabla\psi_{2}),(\rho,\nabla\phi,\nabla\alpha_{1},\nabla\alpha_{2})\big)=
			\nonumber\\
		&&\Big(\rho,\,\nabla\phi,\,\mathbb{P}_{\rho}\Big\{\rho[\nabla\alpha_{1},\nabla\psi_{1}]
			-\rho[\nabla\alpha_{2},\nabla\psi_{2}]\Big\},\,0\Big)
			+\Big(\rho,\,\nabla\phi,\,0,\,\mathbb{P}_{\rho}\Big\{\rho[\nabla\psi_{2},\nabla\alpha_{1}]
			+\rho[\nabla\psi_{1},\nabla\alpha_{2}]\Big\}\Big)\,,\,\,\,\,\,\,\text{}
	\end{eqnarray}
	where $\rho\in \mathcal{D}$ and where $\nabla\phi,\nabla\psi_{1},\nabla\psi_{2},\nabla\alpha_{1},\nabla\alpha_{2}\in \nabla C^{\infty}(M)\,.$
\end{proposition}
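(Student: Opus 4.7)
Since the Nijenhuis tensor is a $(1,2)$-tensor, its value at a point $(\rho,\nabla\phi)\in T\mathcal{D}$ depends only on the values of its arguments there. This lets me extend the two tangent vectors in question as the ``constant'' vector fields introduced in the proof of Lemma \ref{lemma le lie bracket!!!}, namely
\begin{eqnarray*}
X_{(\rho,\nabla\phi)}=(\rho,\nabla\phi,\nabla\psi_{1},\nabla\psi_{2}),\qquad
Y_{(\rho,\nabla\phi)}=(\rho,\nabla\phi,\nabla\alpha_{1},\nabla\alpha_{2}),
\end{eqnarray*}
with the four gradient fields held fixed. The key observation is that $JX$ and $JY$ are themselves constant vector fields of this same form, because the pointwise action $J^{T\mathcal{D}}(\rho,\nabla\phi,u,v)=(\rho,\nabla\phi,-v,u)$ involves no derivative of the base point. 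Explicitly, $(JX)_{(\rho,\nabla\phi)}=(\rho,\nabla\phi,-\nabla\psi_{2},\nabla\psi_{1})$ and $(JY)_{(\rho,\nabla\phi)}=(\rho,\nabla\phi,-\nabla\alpha_{2},\nabla\alpha_{1})$.

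Consequently, Lemma \ref{lemma le lie bracket!!!} applies directly to each of the four Lie brackets $[X,Y]$, $[JX,JY]$, $[JX,Y]$ and $[X,JY]$. Each evaluation returns a vector of the form $(\rho,\nabla\phi,v,0)$, where $v$ is a projected commutator $\pm\mathbb{P}_{\rho}(\rho[\,\cdot\,,\,\cdot\,])$ of the appropriate horizontal components (with the signs inherited from the definition of $J$, using bilinearity of the Lie bracket). Because the vertical slot vanishes and $J(\rho,\nabla\phi,v,0)=(\rho,\nabla\phi,0,v)$, applying $J$ to the last two brackets simply moves the nonzero entry from the third slot to the fourth, with no additional sign. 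Assembling $N^{T\mathcal{D}}(X,Y)=[X,Y]-[JX,JY]+J[JX,Y]+J[X,JY]$ then splits automatically into a horizontal contribution coming from the first two terms and a vertical contribution coming from the last two.

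After using antisymmetry of the Lie bracket to rewrite $-[\nabla\alpha_{1},\nabla\psi_{2}]=[\nabla\psi_{2},\nabla\alpha_{1}]$ and $-[\nabla\alpha_{2},\nabla\psi_{1}]=[\nabla\psi_{1},\nabla\alpha_{2}]$, the vertical part acquires precisely the symmetric form appearing in the statement, while the horizontal part is immediately $\mathbb{P}_{\rho}\{\rho[\nabla\alpha_{1},\nabla\psi_{1}]-\rho[\nabla\alpha_{2},\nabla\psi_{2}]\}$. No conceptual obstacle is anticipated: the entire proof reduces to four invocations of Lemma \ref{lemma le lie bracket!!!} together with the elementary action of $J^{T\mathcal{D}}$; the only delicate point is sign bookkeeping in the mixed brackets $[JX,Y]$ and $[X,JY]$, which is routine.
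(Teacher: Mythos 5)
Your proposal is correct and is essentially the paper's own argument made explicit: the paper proves this proposition simply ``by inspection of the proof of Lemma \ref{lemma le lie bracket!!!}'', i.e.\ by noting that the constant fields $X,Y,J^{T\mathcal{D}}X,J^{T\mathcal{D}}Y$ all fall under that lemma and assembling the four brackets exactly as you do. Your sign bookkeeping checks out against the stated formula, so nothing further is needed.
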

\begin{corollary}
	The almost Hermitian structure $J^{T\mathcal{D}}$ of $T\mathcal{D}$ is not integrable, i.e., $N^{T\mathcal{D}}\not\equiv 0\,.$
\end{corollary}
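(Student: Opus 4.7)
The plan is to invoke the explicit formula for $N^{T\mathcal{D}}$ given in the preceding Proposition and produce a specific input at which it does not vanish. The cleanest simplification is to take $\nabla\psi_2 = \nabla\alpha_2 = 0\,,$ which kills the second summand of the Nijenhuis formula and reduces the first to
\begin{eqnarray*}
N^{T\mathcal{D}}\big((\rho,\nabla\phi,\nabla\psi_1,0),(\rho,\nabla\phi,\nabla\alpha_1,0)\big) = \Big(\rho,\,\nabla\phi,\,\mathbb{P}_{\rho}\big(\rho[\nabla\alpha_1,\nabla\psi_1]\big),\,0\Big)\,.
\end{eqnarray*}
So the task reduces to finding $\rho \in \mathcal{D}$ together with $\alpha_1,\psi_1 \in C^{\infty}(M)$ for which $\mathbb{P}_\rho\big(\rho[\nabla\alpha_1,\nabla\psi_1]\big) \neq 0\,.$

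Next, I would take $\rho \equiv 1/\textup{Vol}(M)$ constant. With this choice the decomposition $\mathfrak{X}(M) = \mathfrak{X}_{d\textup{vol}_g}(M) \oplus \rho\nabla C^{\infty}(M)$ becomes the standard Helmholtz-Hodge splitting and $\mathbb{P}_\rho$ is (up to a positive scalar) the projection onto gradients. The problem then becomes: find $\alpha,\psi \in C^{\infty}(M)$ whose bracket $[\nabla\alpha,\nabla\psi]$ is \emph{not} divergence-free. A convenient criterion is the identity
\begin{eqnarray*}
\textup{div}[\nabla\alpha,\nabla\psi] = g(\nabla\alpha,\nabla\Delta\psi) - g(\nabla\psi,\nabla\Delta\alpha)\,,
\end{eqnarray*}
which one gets from $\mathscr{L}_{[X,Y]} = [\mathscr{L}_X,\mathscr{L}_Y]$ applied to $d\textup{vol}_g\,.$

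To realise the example I would use a local bump-function construction. Pick a point $p \in M\,,$ choose a chart around $p$ providing normal coordinates $(x_1,\dots,x_n)$ at $p\,,$ and a smooth cut-off $\chi \in C_c^\infty(U)$ equal to $1$ on a neighbourhood $V$ of $p\,.$ Set $\alpha := \chi\cdot x_1$ and $\psi := \chi\cdot x_1^3/6\,,$ so that $\alpha, \psi$ agree with the corresponding coordinate polynomials on $V\,.$ Using the normal-coordinate identities $g^{ij}(p) = \delta^{ij}$ and $\Gamma^k_{ij}(p) = 0\,,$ a direct computation gives $\nabla\psi(p) = 0$ and $\partial_1\Delta\psi(p) = 1\,;$ evaluating the identity above at $p$ therefore yields $\textup{div}[\nabla\alpha,\nabla\psi](p) = 1 \neq 0\,,$ so $\mathbb{P}_\rho\big(\rho[\nabla\alpha,\nabla\psi]\big) \neq 0$ and $N^{T\mathcal{D}}$ is non-trivial at this input.

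The main obstacle is the bookkeeping in the last step: one must check that the Riemannian corrections to $\Delta\psi$ coming from $g^{ij}(q) \neq \delta^{ij}$ and $\Gamma^k_{ij}(q) \neq 0$ off $p$ do not contribute to $\partial_1 \Delta\psi(p)\,.$ This works because in normal coordinates those corrections enter $\Delta\psi$ only through terms at least quadratic in $x_1\,,$ whose $\partial_1$-derivative vanishes at the origin; the expected leading coefficient $1$ thus survives. Once this point is verified the corollary follows immediately.
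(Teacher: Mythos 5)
Your proposal is correct and follows the same route as the paper: the corollary is read off from the explicit formula for $N^{T\mathcal{D}}$ in the preceding proposition, and you simply supply the explicit witness that the paper leaves implicit, namely a pair of gradients whose bracket is not divergence-free so that $\mathbb{P}_{\rho}\big(\rho[\nabla\alpha_{1},\nabla\psi_{1}]\big)\neq 0$ for constant $\rho\,.$ Both of your key steps check out --- the identity $\textup{div}[\nabla\alpha,\nabla\psi]=g(\nabla\alpha,\nabla\Delta\psi)-g(\nabla\psi,\nabla\Delta\alpha)$ follows from $\mathscr{L}_{[X,Y]}=[\mathscr{L}_{X},\mathscr{L}_{Y}]$ applied to $d\textup{vol}_{g}$ exactly as you say, and in normal coordinates the metric corrections to $\Delta\psi$ are indeed $O(|x|^{3})$ so that $\partial_{1}\Delta\psi(p)=1$ survives.
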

\section{Discussion : the wave function of a statistical manifold}\label{sss la fin}
	In \S\ref{section Euler-Lagrange etc.}, we associated to a time-dependant probability density function $\rho$ on a Riemannian 
	manifold $(M,g)$ a ``wave function" $\psi:=\sqrt{\rho}\,e^{-\frac{i}{\hbar}\phi}$ whose phase $\phi$ 
	is determined by solving the partial differential equation $\dot{\rho}=\textup{div}(\rho\nabla\phi)\,.$ 
	As we saw, this wave function linearizes the system of equations given in Proposition \ref{proposition les Euler-Lagrange} and 
	yields the usual Schr\"{o}dinger equation.
	
	In this section, \textit{which is mainly heuristic}, we discuss further the correspondence $\dot{\rho}\rightarrow \psi$ 
	through an example\footnote{This example has already been discussed in \cite{Molitor-exponential}, but 
	without any mathematical justifications.}, and make several comments and observations which 
	relate $\psi$ to representation theory, K\"{a}hler geometry, the geometrical formulation of quantum mechanics and quantization.\\

	Let us start with a simple example. 
	Let $\mathcal{N}(\mu,1)$ be the space of probability density functions $p(\xi;\mu)$ defined over $\mathbb{R}$ by  
	\begin{eqnarray}
		p(\xi;\mu):=\dfrac{1}{\sqrt{2\pi}}\,\textup{exp}\,\Big\{-\dfrac{(\mu-\xi)^{2}}2{}\Big\}\,,
	\end{eqnarray}
	where $\xi,\mu\in \mathbb{R}\,.$ 

	The set $\mathcal{N}(\mu,1)$ is a $1$-dimensional statistical manifold parameterized by the mean $\mu\in \mathbb{R}\,,$ i.e. 
	$\mathcal{N}(\mu,1)\cong \mathbb{R}\,.$ As one may easily 
	show (see \cite{Amari-Nagaoka}), the Fisher metric $h_{F}(\mu)$ is the Euclidean metric, and the exponential 
	connection $\nabla^{(e)}$ and the mixture connection $\nabla^{(m)}$ are equal to the canonical flat connection.
	Consequently (see \cite{Molitor-exponential}), $T\mathcal{N}(\mu,1)$ is naturally a K\"{a}hler manifold (via Dombrowski's construction)
	and one sees that $T\mathcal{N}(\mu,1)\cong\mathbb{C}$ via the map $b\,\partial_{\mu}\vert_{a}\mapsto a+i\,b\,.$
	
	Now, one of the most important ingredients of the geometrical formulation of quantum mechanics is the notion of \textit{K\"{a}hler functions}. 
	By definition, a smooth function $f\,:\,N\rightarrow \mathbb{R}$ on a K\"{a}hler manifold $N$ with K\"{a}hler structure $(g,J,\omega)$ 
	is a K\"{a}hler function if it satisfies $\mathcal{L}_{X_{f}}g=0\,,$ where $X_{f}$ is the 
	Hamiltonian vector field associated to $f\,,$ i.e. $\omega(X_{f},\,.\,)=df(.)\,,$ and where $\mathcal{L}_{X_{f}}$ is the Lie derivative 
	in the direction $X_{f}\,.$ 

	The space of K\"{a}hler functions $\mathscr{K}(N)$ on a K\"{a}hler manifold is always a finite dimensional Lie algebra for the 
	natural Poisson bracket $\{f,g\}:=\omega(X_{f},X_{g})\,.$ For example, when $N=\mathbb{P}(\mathbb{C}^{n})$ is the complex projective space, 
	then $\mathscr{K}(\mathbb{P}(\mathbb{C}^{n}))$ is isomorphic (in the Lie algebra sense) 
	to the space of $n\times n$ skew Hermitian matrices. Hence, K\"{a}hler functions are the natural geometric analogues of the 
	usual observables in quantum mechanics (see \cite{Ashtekar}).

	In the case $N=\mathbb{C}\,\,\,(\cong T\mathcal{N}(\mu,1))\,,$ it is not difficult to see that 
	the space $\mathscr{K}(\mathbb{C})$ of K\"{a}hler functions on $\mathbb{C}$ is spanned by 
	\begin{eqnarray}
		1,\,\,x,\,\,y,\,\,\dfrac{x^{2}+y^{2}}{2}
	\end{eqnarray}
	 (here $x$ and $y$ are respectively the real and imaginary parts of $z\in \mathbb{C}$), with the following 
	commutators 
	\begin{eqnarray} 
		\{1,\,.\,\}=0\,,\,\,\,\{x,y\}=1\,,\,\,\,\Big\{x,\dfrac{x^{2}+y^{2}}{2}\Big\}=y\,,\,\,\,
		\Big\{y,\dfrac{x^{2}+y^{2}}{2}\Big\}=-x\,.
	\end{eqnarray}

	The Lie algebra $\mathscr{K}(\mathbb{C})$ is related to quantum physics. 	
	If $p(t)$ is a smooth curve in $\mathcal{N}(\mu,1)\,,$ it is in particular a smooth curve in $\mathcal{D}(\mathbb{R})\,,$
	the space of smooth density probability functions\footnote{Even though $\mathbb{R}$ is not compact, 
	the space $\mathcal{D}(\mathbb{R})$ can be given 
	the structure of an infinite dimensional manifold, for example by using the convenient setting developed in \cite{Kriegl-Michor}.} 
	defined over $\mathbb{R}$ for the Lebesgue measure. Moreover, 
	if the time-derivative $\dot{p}(t)$ of $p(t)$ is identified with $x(t)+iy(t)\in \mathbb{C}\,,$ then a direct computation 
	shows that 
	\begin{eqnarray}\label{divergence macdo}
		\dfrac{d\,p(t)}{dt}=\textup{div}\,\big(p(t)\,\nabla \phi\big)\,,
	\end{eqnarray}
	where the (time-dependant) function $\phi\,:\,\mathbb{R}\rightarrow \mathbb{R}$ is defined (up to an additive constant) by 
	\begin{eqnarray}
		\phi(\xi)=y(t)\,\xi\,.
	\end{eqnarray}
	Hence, and taking into account \eqref{pas allemagne}, the derivative $\dot{p}(t)$ has an associated wave function 
	$\Psi\,:\, \mathbb{C}\rightarrow L^{2}(\mathbb{R},\mathbb{C})$ 
	which is defined, for $\xi\in \mathbb{R}$ and $z=x+iy\in \mathbb{C}\,,$ by 
	$\Psi(z)(\xi):=\sqrt{p(t)}\,e^{-\frac{i}{\hbar}\phi(\xi)}\,,$ i.e., 
	\begin{eqnarray}
		\Psi(z)(\xi):=\dfrac{1}{(2\pi)^{1/4}}\,\textup{exp}\bigg\{-\dfrac{(\xi-x)^{2}}{4}\bigg\}\,
		\exp\bigg\{-\dfrac{i}{\hbar}\,y\,\xi\bigg\}\,.
	\end{eqnarray}
	By construction, if $z=x+iy\,,$ then 
	\begin{eqnarray}
		\vert \Psi(z)(\xi)\vert^{2}=p(\xi;x)\,.
	\end{eqnarray}
	The map $\Psi$ is related to quantization and the geometrical formulation of quantum mechanics as follows.
	Let $\mathbf{Q}$ be the linear map from the space $\mathscr{K}(\mathbb{C})$ to the space of unbounded operators 
	acting on $L^{2}(\mathbb{R},\mathbb{C})$ which is defined by 
	\begin{eqnarray}
	1\mapsto Id,\,\,\,\,\,\,\,\,\,x\mapsto x\,,\,\,\,\,\,\,\,\,\,y\mapsto i\hbar\frac{\partial}{\partial x}\,,
	\,\,\,\,\,\,\,\,\frac{x^{2}+y^{2}}{2}\mapsto 
	-\frac{\hbar^{2}}{2}\frac{\partial^{2}}{\partial x^{2}}
	+\frac{1}{2}x^{2}-\Big(\frac{\hbar^{2}}{8}+\frac{1}{2}\Big)\,.
	\end{eqnarray}
	Observe that $\mathbf{Q}$ is ``essentially" the operator which quantizes the classical harmonic oscillator.
\begin{proposition}\label{envie de courir?}
	For all $f\in \mathscr{K}(\mathbb{C})$ and for all $z\in \mathbb{C}\,,$ we have :
	\begin{eqnarray}\label{eee bientot la fin j'espere}
		f(z)=\big\langle \Psi(z),\,\mathbf{Q}(f)\cdot \Psi(z)\big\rangle\,,
	\end{eqnarray}
	where $\langle\,,\,\rangle$ is the usual $L^{2}$-scalar product on $L^{2}(\mathbb{R},\mathbb{C})\,.$
\end{proposition}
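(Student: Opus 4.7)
Since $\mathbf{Q}$ and the right-hand side of \eqref{eee bientot la fin j'espere} are both $\mathbb{R}$-linear in $f$, it suffices to verify the identity on the four basis elements $1,x,y,(x^{2}+y^{2})/2$ of $\mathscr{K}(\mathbb{C})$. The main tool will be the explicit Gaussian
\begin{eqnarray*}
|\Psi(z)(\xi)|^{2}=p(\xi;x)=\tfrac{1}{\sqrt{2\pi}}\exp\!\big\{-\tfrac{(\xi-x)^{2}}{2}\big\},
\end{eqnarray*}
which is a probability density of mean $x$ and variance $1$. The case $f=1$ is the normalization $\langle\Psi(z),\Psi(z)\rangle=\int_{\mathbb{R}}p(\xi;x)\,d\xi=1$. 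The case $f=x$ is immediate as well: since $\mathbf{Q}(x)$ is multiplication by the position variable $\xi$, one gets $\langle\Psi(z),\mathbf{Q}(x)\Psi(z)\rangle=\int_{\mathbb{R}}\xi\,p(\xi;x)\,d\xi=x$.

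For $f=y$ the plan is to compute $\partial_{\xi}\Psi(z)(\xi)=\bigl[-\tfrac{\xi-x}{2}-\tfrac{i}{\hbar}y\bigr]\Psi(z)(\xi)$ directly from the explicit formula. Multiplying by $i\hbar$ and pairing with $\Psi(z)$ gives
\begin{eqnarray*}
\bigl\langle\Psi(z),\,\mathbf{Q}(y)\Psi(z)\bigr\rangle=\int_{\mathbb{R}}\Bigl[-\tfrac{i\hbar(\xi-x)}{2}+y\Bigr]p(\xi;x)\,d\xi=y,
\end{eqnarray*}
the imaginary piece vanishing because $p(\xi;x)$ is symmetric about $\xi=x$.

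The only genuinely computational case is $f=(x^{2}+y^{2})/2$, where the constant correction $-(\hbar^{2}/8+1/2)$ appearing in $\mathbf{Q}(f)$ has to conspire with the second-moment of the Gaussian and the kinetic term to produce exactly $(x^{2}+y^{2})/2$. My plan is to treat the potential and kinetic parts separately: the potential contribution is $\tfrac{1}{2}\langle\xi^{2}\rangle=\tfrac{1}{2}(x^{2}+1)$, since the variance of $p(\,\cdot\,;x)$ is $1$. For the kinetic part, I integrate by parts once:
\begin{eqnarray*}
-\tfrac{\hbar^{2}}{2}\bigl\langle\Psi(z),\partial_{\xi}^{2}\Psi(z)\bigr\rangle=\tfrac{\hbar^{2}}{2}\bigl\langle\partial_{\xi}\Psi(z),\partial_{\xi}\Psi(z)\bigr\rangle,
\end{eqnarray*}
and then insert the formula for $\partial_{\xi}\Psi(z)$ computed above. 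The cross terms between the real factor $-\tfrac{\xi-x}{2}$ and the imaginary factor $-\tfrac{iy}{\hbar}$ cancel upon taking the modulus squared, leaving
\begin{eqnarray*}
\tfrac{\hbar^{2}}{2}\int_{\mathbb{R}}\Bigl[\tfrac{(\xi-x)^{2}}{4}+\tfrac{y^{2}}{\hbar^{2}}\Bigr]p(\xi;x)\,d\xi=\tfrac{\hbar^{2}}{8}+\tfrac{y^{2}}{2}.
\end{eqnarray*}
Adding everything and subtracting the constant $\hbar^{2}/8+1/2$ gives exactly $(x^{2}+y^{2})/2=f(z)$, and $\mathbb{R}$-linearity extends the identity to every K\"ahler function.

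The main obstacle is this last computation: one must track carefully the zero-point energy $\hbar^{2}/8$ arising from $\|\partial_{\xi}\Psi(z)\|^{2}$ and the contribution $1/2$ coming from the variance of $p(\,\cdot\,;x)$, and observe that these are precisely the quantities absorbed by the constant shift built into $\mathbf{Q}((x^{2}+y^{2})/2)$. Once this matching is verified, the proposition follows at once.
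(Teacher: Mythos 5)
Your computation is correct and is precisely the "direct calculation" that the paper's proof consists of (the paper gives no details beyond the phrase "By direct calculations"); in particular your bookkeeping of the variance contribution $1/2$ and the zero-point term $\hbar^{2}/8$ against the constant shift in $\mathbf{Q}\big((x^{2}+y^{2})/2\big)$ is exactly right. Nothing further is needed.
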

\begin{proof}
	By direct calculations.
\end{proof}
	Because of the above proposition, we shall call $\Psi\,:\,\mathbb{C}\rightarrow L^{2}(\mathbb{R},\mathbb{C})$ the 
	\textit{wave function} associated to $\mathcal{N}(\mu,1)\,.$ Clearly, this wave function comes from the 
	embedding $\mathcal{N}(\mu,1)\subseteq \mathcal{D}(\mathbb{R})$ together with the fact that every element of $T\mathcal{D}$ 
	possesses a wave function (up to a phase, see \S\ref{section Euler-Lagrange etc.}). \\

%

	More generally, if $S$ is a submanifold of the space $\mathcal{D}$ of probability density functions defined 
	on an oriented (compact and connected) Riemannian manifold $(M,g)\,,$ then $TS\subseteq T\mathcal{D}\,,$ and thus, to every time-dependant 
	probability density function $\rho\,,$ there is, by  solving the equation $\dot{\rho}=\textup{div}(\rho\nabla\phi)\,,$ 
	an associated wave function $\Psi=\sqrt{\rho}\,e^{-\frac{i}{\hbar}\phi}\,.$ We thus get a map 
	that we call the \textit{wave function} associated to $S$ (which is, strictly speaking, only defined up to a phase factor) : 
	\begin{eqnarray}
		\Psi\,:\,TS\rightarrow L^{2}(M,\mathbb{C})\,.
	\end{eqnarray}

	The above wave function is an infinite dimensional generalization of a wave function that we already 
	considered\footnote{In \cite{Molitor-exponential},
	we use a different notation.} 
	in \cite{Molitor-exponential}. In the latter paper, we consider a finite set $\Omega:=\{x_{1},...,x_{n}\}$ 
	on which we define the space $\mathcal{P}_{n}^{\times}$ of 
	positive probabilities $p$ on $\Omega\,,$ i.e. $p\,:\,\Omega\rightarrow \mathbb{R}\,,$ $p>0\,,$ $\sum_{k=1}^{n}p(x_{k})=1\,.$
	The space $\mathcal{P}_{n}^{\times}$ is a finite dimensional statistical manifold. If $z_{p}=dp(t)/dt\vert_{0}$ is a tangent 
	vector at $p\in\mathcal{P}_{n}^{\times}\,,$ then we construct a wave function $\Psi\,:\,T\mathcal{P}_{n}^{\times}\rightarrow
	L^{2}(\Omega,\mathbb{C})\cong \mathbb{C}^{n}$ as 
	follows:
	\begin{eqnarray}
		\Psi(z_{p})(x_{k}):=\sqrt{p(x_{k}})\,e^{iu_{k}/2}\,,
	\end{eqnarray} 
	where $u_{k}\in \mathbb{R}$ is defined, for $k=1,...,n\,,$ by 
	\begin{eqnarray}\label{macdo encore et encore}
		\dfrac{d\,p(t)(x_{k})}{dt}\,\bigg\vert_{0}=u_{k}p(x_{k})\,.
	\end{eqnarray}
	Equation \eqref{macdo encore et encore}	is a finite dimensional analogue of \eqref{divergence macdo}.

	Using the above ``finite dimensional" wave function, we were able in \cite{Molitor-exponential} to establish an analogue of 
	Proposition \ref{envie de courir?} in the case of the binomial distribution $\mathcal{B}(n,q)$ defined over $\{0,1,...,n\}\,,$
	the latter being viewed as a subspace of $\mathcal{P}_{n+1}^{\times}$ 
	(see \cite{Molitor-exponential}, Proposition 9.7 and Lemma 9.8), and to conclude
	that the spin of particle in a Stern-Gerlach experiment is encoded in $\mathcal{B}(n,q)\,.$\\

	These examples suggest that a ``moving probability density function" always possesses an associated wave function, and that 
	the latter, in good cases, is related to representation theory, quantization, and of course to 
	the natural almost Hermitian structure of the underlying 
	statistical manifold. More important, this suggests that the usual concepts of the standard quantum 
	formalism (wave functions, Hilbert spaces, Hermitian operators, etc.)
	may be mathematically derived from more primitive concepts, rooted in statistics and information geometry.

	To clarify these foundational aspects of quantum mechanics would be particularly interesting, especially in view of quantum gravity.

	\textbf{}\\
\textbf{Acknowledgements.} I would like to thank Yoshiaki Maeda, Hsiung Tze and Tilmann Wurzbacher for many helpful discussions.
	
	This work was done with the financial support of the Japan Society for the Promotion of Science.


\begin{thebibliography}{Omo97}

\bibitem[AM78]{Abraham-Marsden}
R.~Abraham and J.~E. Marsden.
\newblock {\em \textit{Foundations of mechanics}}.
\newblock {Benjamin/Cummings Publishing Co.}, Reading, Mass., 1978.
\newblock Second edition, revised and enlarged, With the assistance of Tudor
  Ratiu and Richard Cushman.

\bibitem[AN00]{Amari-Nagaoka}
Shun-ichi Amari and Hiroshi Nagaoka.
\newblock {\em Methods of information geometry}, volume 191 of {\em
  Translations of Mathematical Monographs}.
\newblock American Mathematical Society, Providence, RI, 2000.
\newblock Translated from the 1993 Japanese original by Daishi Harada.

\bibitem[AS99]{Ashtekar}
Abhay Ashtekar and Troy~A. Schilling.
\newblock Geometrical formulation of quantum mechanics.
\newblock In {\em On {E}instein's path ({N}ew {Y}ork, 1996)}, pages 23--65.
  Springer, New York, 1999.

\bibitem[Dom62]{Dombrowski}
Peter Dombrowski.
\newblock On the geometry of the tangent bundle.
\newblock {\em J. Reine Angew. Math.}, 210:73--88, 1962.

\bibitem[Ham82]{Hamilton}
R.~S. Hamilton.
\newblock \textit{The inverse function theorem of {N}ash and {M}oser}.
\newblock {\em \emph{Bull. Amer. Math. Soc. (N.S.)}}, 7(1):65--222, 1982.

\bibitem[Jos02]{Jost}
J.~Jost.
\newblock {\em \textit{Riemannian geometry and geometric analysis}}.
\newblock {Universitext}. {Springer-Verlag}, Berlin, third edition, 2002.

\bibitem[KM97]{Kriegl-Michor}
A.~Kriegl and P.~W. Michor.
\newblock {\em \textit{The convenient setting of global analysis}}, volume~53
  of {\em \emph{Mathematical Surveys and Monographs}}.
\newblock {American Mathematical Society}, Providence, RI, 1997.

\bibitem[Lan02]{Lang}
S.~Lang.
\newblock {\em \textit{Introduction to differentiable manifolds}}.
\newblock {Universitext}. {Springer-Verlag}, New York, second edition, 2002.

\bibitem[Mol]{Molitor-exponential}
M.~Molitor.
\newblock {\em Exponential families, K\"{a}hler geometry and quantum mechanics}.
\newblock {\em arxiv.org/abs/1203.2056v1}.

\bibitem[Mol12]{Molitor-quantique}
M.~Molitor.
\newblock Remarks on the statistical origin of the geometrical formulation of
  quantum mechanics.
\newblock {\em \em{To appear in} International Journal of Geometric Methods in
  Modern Physics}, 
9(3), 2012 (DOI: 10.1142/S0219887812200010).


\bibitem[Mol10]{Molitor-Yang-Mills}
M.~Molitor.
\newblock \emph{ The group of unimodular automorphisms of a principal bundle
  and the Euler-Yang-Mills equations}.
\newblock {\em \emph{Differential Geometry and its Applications}},
  28(5):543--564, 2010.

\bibitem[MR93]{Murray-Rice}
Michael~K. Murray and John~W. Rice.
\newblock {\em Differential geometry and statistics}, volume~48 of {\em
  Monographs on Statistics and Applied Probability}.
\newblock Chapman \& Hall, London, 1993.

\bibitem[Omo97]{Omori}
H.~Omori.
\newblock {\em \textit{Infinite-dimensional {L}ie groups}}, volume 158 of {\em
  \emph{Translations of Mathematical Monographs}}.
\newblock {American Mathematical Society}, Providence, RI, 1997.
\newblock Translated from the 1979 Japanese original and revised by the author.

\end{thebibliography}

\end{document}